\newtheorem{thm}{Theorem}[section]
\newtheorem{prop}[thm]{Proposition}
\newtheorem{lem}[thm]{Lemma}
\newtheorem{lemma}[thm]{Lemma}
\theoremstyle{definition}
\newtheorem{example}[thm]{Example}
\theoremstyle{remark}
\newtheorem{rmk}[thm]{Remark}
\numberwithin{equation}{section} 
\begin{document}
\newcommand{\Sym}{\operatorname{S}}
\newcommand{\GL}{\operatorname{GL}}
\newcommand{\C}{{\mathbb C}}
\newcommand{\bbC}{{\mathbb C}}
\newcommand{\bbG}{{\mathbb G}}
\newcommand{\bbQ}{{\mathbb Q}}
\newcommand{\bbZ}{{\mathbb Z}}
\newcommand{\bbA}{{\mathbb A}}
\newcommand{\N}{{\mathbb N}}
\newcommand{\R}{{\mathbb R}}

\newcommand{\SL}{\operatorname{SL}}
\newcommand{\Span}{\operatorname{Span}}
\newcommand{\Stab}{\operatorname{Stab}}
\renewcommand{\P}{{\mathbb P}}
\newcommand{\bbP}{{\mathbb P}}
\renewcommand{\O}{{\mathcal O}}
\newcommand{\F}{{\mathcal F}}
\newcommand{\I}{{\mathcal I}}
\newcommand{\Mat}{\operatorname{M}}
\newcommand{\Char}{\operatorname{char}}
\newcommand{\tr}{{\rm tr}}

\newcommand{\Q}{\mathbb Q}
\newcommand{\Z}{\mathbb Z}
\newcommand{\G}{\mathbb G}
\renewcommand{\P}{\mathbb P}
\newcommand{\Spec}{\operatorname{Spec}}
\renewcommand{\c}{\subseteq}
\newcommand{\A}{\mathbb A}
\renewcommand{\L}{\mathbb L}
\newcommand{\mc}[1]{\mathcal{#1}}
\newcommand{\cl}{\overline}
\newcommand{\set}[1]{\{#1\}}
\renewcommand{\phi}{\varphi}
\newcommand{\ed}{\operatorname{ed}}
\newcommand{\on}[1]{\operatorname{#1}}
\newcommand{\laur}[1]{#1(\!(t)\!)}
\newcommand{\ang}[1]{\left \langle{#1}\right \rangle}

\author{Zinovy Reichstein}
\address{Department of Mathematics\\
% 1984 Mathematics Road\\
 University of British Columbia\\
 Vancouver, BC V6T 1Z2\\Canada}
 \email{reichst@math.ubc.ca}
\thanks{Zinovy Reichstein was partially supported by
 National Sciences and Engineering Research Council of
 Canada Discovery grant 253424-2017.}

%    Information for second author
\author{Federico Scavia}
\email{scavia@math.ubc.ca}
\thanks{Federico Scavia was partially supported by a graduate fellowship from the University of British Columbia.}

\subjclass[2010]{Primary 14E08, 14M20}
	
% 14E  View Publications (1973-now) Birational geometry
% 14E08  View Publications (2000-now) Rationality questions [See also 14M20]
%14M  View Publications (1973-now) Special varieties
% 14M20  View Publications (1973-now) Rational and unirational varieties [See also 14E08]
%%%%%%%%%%%%%%%%%%%%%%%%%%%%%%%%%%%%%%%%
\keywords{Noether Problem, rationality, spinor groups}

\title{The Noether Problem for spinor groups of small rank}

	\begin{abstract}
		Building on prior work of Bogomolov, Garibaldi, Guralnick, Igusa, Kordonski{\u\i}, Merkurjev and others, we show that the Noether Problem for $\on{Spin}_n$ has a positive solution for every $n\leq 14$ over an arbitrary field of characteristic
		$\neq 2$.
	\end{abstract}
	
	\maketitle
			
\section{Introduction}	

Let $k$ be a field, $\overline{k}$ be an algebraic closure of $k$, $G$ be a linear algebraic group defined over $k$, and $\rho \colon G \hookrightarrow \GL(V)$ be a $k$-representation. Assume that $\rho$ is generically free; that is, the scheme-theoretic stabilizer $\Stab_G(v)$ is trivial for a point $v \in V(\overline{k})$ in general position. The Noether Problem asks whether the field of rational $G$-invariants $k(V)^G$ is a purely transcendental extension of $k$. Equivalently, it asks whether the Rosenlicht quotient $V/G$ is rational over $k$.
(For the definition of the Rosenlicht quotient, see \Cref{sect.rosenlicht}.) The following variants of the Noether Problem are also of interest: Is $V/G$ stably rational? Is $V/G$ retract rational? Recall that a $d$-dimensional algebraic variety $X$ is called rational if $X$ is birationally equivalent to the affine space $\bbA^d$, stably rational if $X \times \bbA^r$ is birationally equivalent to $\bbA^{d + r}$ for some $r \geqslant 0$ and retract rational if the identity morphism $\operatorname{id} \colon V/G \to V/G$, viewed as a rational map, can be factored through the affine space $\bbA^m$ for some $m \geqslant d$. 

By the no-name lemma~\cite[Lemma 2.1]{reichstein2006birational} the answer to 
the Noether Problem for stable and retract rationality depends only on the group $G$ and not on the choice of 
the representation $V$. Following A.~Merkurjev~\cite{merkurjev2017invariants}, we will say that the classifying stack $BG$ is stably 
(respectively, retract) rational if $V/G$ is stably (respectively, retract) rational for some (and thus every) 
generically free representation $G \hookrightarrow \GL(V)$. We will also say that $BG$ and $BH$ are stably birationally equivalent if $V/G$ and $W/H$ are stably birationally equivalent, where  $H \hookrightarrow \GL(W)$ is a generically free representation of $H$. This terminology is 
related to the fact that $V/G$ can be thought of as an approximation to $BG$. Note that
for us ``$BG$ is stably rational" will be a convenient short-hand for ``the Noether Problem for stable rationality has a positive answer for $G$"; we will not actually work with stacks in this paper. 

In the case where $G$ is a finite group, and $V$ is the regular representation of $G$, the question of rationality of $V/G$
was posed by E.~Noether in the context of her work on the Inverse Galois Problem~\cite{noether1917gleichungen}. 
For a finite group $G$, $V/G$ may not be stably (or even retract) rational. The first such examples over number fields $k$ were given by 
R.~Swan~\cite{swan1969invariant} and V.~Voskresenski{\u\i}~\cite{voskresenskii1970question} and over $k = \bbC$ 
by D.~Saltman~\cite{saltman}. For many specific finite groups $G$ the Noether Problem remains open.

In the case where $G$ is a connected split semisimple groups over $k$, no counter-examples to the Noether Problem are known.
It is known that $BG$ is stably rational for some $G$ (e.g., for $G = \on{GL}_n$, $\on{SL}_n$, $\on{SO}_n$) but for many other connected 
split semisimple groups the Noether Problem remains open.
Among these, projective linear groups $\on{PGL}_n$ and spinor groups
$\on{Spin}_n$ have received the most attention.

For $G = \on{PGL}_n$ the Noether Problem arose independently in ring theory in connection with 
universal division algebras; see~\cite[p.~254]{procesi}. It is known that $BG$ 
is stably rational for every $n$ dividing $420$; the remaining cases are open. See~\cite{lb-rationality} for an overview.

In~\cite{bogomolov1986stable}, F.~A.~Bogomolov claimed that $BG$ is stably rational for every simply connected simple complex
algebraic group $G$. However, there is a mistake in his argument; in particular, it breaks down for the spinor groups.
V.~Kordonski{\u\i}~\cite{kordonskii2000stable} subsequently proved that $B\on{Spin}_7$ and $B\on{Spin}_{10}$ are stably 
rational (again, over the field of complex numbers). More recently,
 Merkurjev~\cite[Section 4]{merkurjev2019rationality} showed that $B\on{Spin}_n$ is retract rational 
 for $n\leq 14$ over any field $k$ of characteristic $\neq 2$\footnote{Over a field of characteristic $0$ retract rationality of $\on{Spin}_{n}$ for $n \leqslant 12$
 was proved earlier by J.-L.~Colliot-Th\'{e}l\`ene and J.-J.~Sansuc~\cite{colliot2007rationality}.},
 and conjectured that $B\on{Spin}_n$ is, in fact, 
 {\em not} retract rational for $n\geq 15$.
%%%%%%%%%%%%%%%%%%%%%%%%%%%%%%%%%%%%%%%%%%%%%%%%%%%%%%%%%%%%%%%%%%%%%%%%%%%%%%%%%%
% of split semisimple groups for which the Noether Problem has a negative answer. If $G=\on{SL}_n,\on{Sp}_{2n}$, then $G$ is 
% special, i.e. every $G$-torsor is Zariski locally trivial, then $BG$ is stably rational; see \cite[Proposition
% 4.7]{colliot2007rationality}. If $G=\on{O}_n,\on{SO}_n$, then $BG$ is stably rational by \cite[Proposition 4.10, Proposition 
% 4.12]{colliot2007rationality}. The stable rationality of $B\on{PGL}_n$ has been proved for $n=2$ by Sylvester 
% \cite{sylvester1883involution}, for $n=3,4$ by Formanek \cite{formanek1979center} \cite{formanek1980center}, and for $n=5,7$
% by Le Bruyn and Bessenrodt \cite{lebruyn1989stable} with the help of a computer; a direct proof has then been found by 
% Beneish \cite{beneish1998induction}. When $n$ is prime, Saltman \cite{saltman1984retract} proved that $B\on{PGL}_n$ is 
% retract rational. Saltman also showed in \cite{saltman2002invariant} that $B\on{PO}_n$ is stably rational when $8\nmid n$ 
% and retract rational when $16\nmid n$, and that $B\on{PSp}_{2n}$ is stably rational when $4\nmid n$ and retract rational 
% when $8\nmid n$. Bogomolov \cite{bogomolov1986stable} proved that $BG$ is stably rational when $G=G_2,F_4,E_6,E_7$; 
% the stable rationality of $BE_8$ remains open. To the best of the author's knowledge, the retract rationality of $BE_8$ 
% is also open. 
%%%%%%%%%%%%%%%%%%%%%%%%%%%%%%%%%%%%%%%%%%%%%%%%%%%%%%%
We strengthen Merkurjev's result as follows.

\begin{thm} \label{mainthm}
Let $k$ be a field of characteristic $\neq 2$. Then
$B\on{Spin}_{n}$ is stably rational over $k$ for every $n\leq 14$.
\end{thm}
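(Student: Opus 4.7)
The plan is to establish stable rationality of $B\on{Spin}_n$ for each $n\leq 14$ by combining the no-name lemma with a Luna-type slice method, reducing each case to a smaller, more tractable group whose classifying stack is already known (or more easily shown) to be stably rational. By the no-name lemma, we are free to choose any convenient generically free representation of $\on{Spin}_n$, so the strategy is to select one whose principal generic stabilizer is both small and well-understood.

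For $n\leq 6$, the exceptional isomorphisms $\on{Spin}_3\cong \SL_2$, $\on{Spin}_4\cong \SL_2\times \SL_2$, $\on{Spin}_5\cong \operatorname{Sp}_4$, and $\on{Spin}_6\cong \SL_4$ reduce to groups with rational classifying spaces, so nothing is to be done. For $7\leq n\leq 14$, the idea is to take $V$ to be a spin or half-spin representation of $\on{Spin}_n$, possibly enlarged by copies of the vector representation to guarantee generic freeness. The generic stabilizers $H_n\subseteq \on{Spin}_n$ of such representations were computed by Igusa in small rank and have been extended by Garibaldi--Guralnick; these are groups such as $G_2$, $\on{Spin}_7$, $\SL_2^{\times 3}/\mu_2$, or modest products thereof.

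The core technical step is to construct an $H_n$-stable affine slice $S_n\subset V$ such that the natural $\on{Spin}_n$-equivariant map $\on{Spin}_n\times^{H_n} S_n \to V$ is birational. Once this is achieved, the Rosenlicht quotient satisfies $V/\on{Spin}_n \sim S_n/H_n$ birationally, and by the no-name lemma $S_n/H_n$ is stably birationally equivalent to $BH_n$. This reduces the stable rationality of $B\on{Spin}_n$ to that of $BH_n$, which can then be settled either by an inductive step (e.g., the reduction from $\on{Spin}_9$ to $\on{Spin}_7$ via the $16$-dimensional spin representation) or by direct invariant-theoretic arguments for $H_n$. Note that the cases $n=7$ and $n=10$ were treated in this spirit by Kordonski{\u\i} over $\bbC$, and those arguments should extend to arbitrary $k$ with $\operatorname{char}(k)\neq 2$.

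The main obstacle will be the largest cases $n\in\{12,13,14\}$, where the (half-)spin representation is high-dimensional and the stabilizer $H_n$ has a more intricate structure; here one must carefully verify both (a) the birationality of the slice map, and (b) stable rationality of the resulting quotient $S_n/H_n$. A secondary difficulty is that Merkurjev's retract rationality argument proceeds through generic torsors and cohomological invariants, which a priori does not produce an explicit birational model; upgrading from retract to stable rationality therefore requires genuinely new geometric input at each stage, beyond what Merkurjev's approach supplies.
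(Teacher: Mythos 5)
Your proposal is a strategy outline rather than a proof, and the pieces it leaves open are precisely the hard content. First, the mechanism you propose is shaky: a Luna-type slice requires characteristic $0$, a reductive stabilizer and a closed orbit, whereas the relevant (half-)spin representations for $n=7,10,11,14$ are prehomogeneous under $\on{Spin}_n\times\G_m$ (dense open orbit, hence no closed generic orbit), and the generic stabilizer can be non-reductive (for $\on{Spin}_{10}$ it is $(\G_a)^8\rtimes\on{Spin}_7$). There is also an internal confusion: if you enlarge $V$ by vector representations ``to guarantee generic freeness,'' the generic stabilizer becomes trivial and there is no reduction to extract; the reduction $B\on{Spin}_n\sim BH$ needs a dense orbit, not generic freeness. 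The paper replaces the slice machinery by an elementary dense-orbit stabilizer lemma (valid in any characteristic, provided the stabilizer is smooth, which must be and is checked via Lie algebra computations of Sato--Kimura), and for $n=7,11$ it must first pass to the even Clifford group $\Gamma_n^+=(\on{Spin}_n\times\G_m)/\mu_2$, since the spin representation of $\on{Spin}_7$ or $\on{Spin}_{11}$ alone has no dense orbit; none of these points appear in your plan, and ``Kordonski{\u\i}'s arguments should extend to arbitrary $k$'' is an assertion, not an argument.

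Second, even granting the reduction $B\on{Spin}_n\sim BH_n$, you never prove stable rationality of the reduced groups, and that is where the genuinely new input lies: the stabilizers that actually occur are $G_2$ (for $n=7$), $(\G_a)^8\rtimes\on{Spin}_7$ (for $n=10$), an extension of $\mu_2$ or $\mu_4$ by $\SL_5$ (for $n=11$), and $(G_2\times G_2)\rtimes\mu_m$ with $m\mid 8$ (for $n=14$), not ``$\SL_2^{\times 3}/\mu_2$ or modest products.'' The paper handles these by proving that $B(\SL_n\rtimes\bbZ/2\bbZ)$ is stably rational (whence $BG_2$, via the stabilizer of a generic octonion under $G_2\times\G_m$), by classifying extensions of $\bbZ/2\bbZ$ by $\SL_n$ for odd $n$, and by wreath-product and vector-group lemmas; your phrase ``settled by direct invariant-theoretic arguments for $H_n$'' simply defers all of this. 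Finally, you omit the reduction, via Merkurjev's stable birational equivalence $B\on{Spin}_{2m+1}\sim B\on{Spin}_{2m+2}$, to the four cases $n=7,10,11,14$; without it you must treat $n=8,9,12,13$ directly, and you explicitly leave $n\in\{12,13,14\}$ unresolved. As it stands the proposal identifies a plausible reduction pattern but does not constitute a proof of the theorem.
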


The remainder of this paper will be devoted to proving Theorem~\ref{mainthm}. Our strategy will be as follows.  For $n \leqslant 6$,
it is well known that $\on{Spin}_n$ is special; see~\cite[Section 16.1]{garibaldi2009cohomological}. Hence, $B\on{Spin}_n$ is stably rational; see \Cref{lem.special}.
Merkurjev~\cite[Corollary 5.7]{merkurjev2019rationality} showed that $B\on{Spin}_{2m+1}$ is stably birationally 
equivalent to $B\on{Spin}_{2m+2}$ for every $m\geq 0$. Thus in order to prove Theorem~\ref{mainthm} it suffices to show 
that $B\on{Spin}_n$ is stably rational for $n = 7$, $10$, $11$ and $14$. 
% As we mentioned above, for $n = 7$ and $10$, this was proved by Kordonski{\u\i} over the field of complex numbers.
We will give self-contained proofs that work over an arbitrary field $k$ of characteristic $\neq 2$ in 
% For $n = 11$ and $14$, Theorem~\ref{mainthm} is new even in characteristic $0$; the proofs are in
Propositions~\ref{prop.7}, \ref{prop.10},~\ref{prop.11} and~\ref{prop.14}, respectively.
Along the way we will show that $B(\on{SL}_n \rtimes \bbZ/2 \bbZ)$ is stably rational; see~\Cref{sect.sln}. 
In the last section we give an alternative proof of Theorem~\ref{mainthm} over $k = \mathbb C$
suggested to us by G.~Schwarz. 
% It is based on the classification of coregular representations of spinor groups 
% in~\cite{schwarz1978representations}.

\section{Rosenlicht quotients}
\label{sect.rosenlicht}

For the remainder of this paper $k$ will denote a field of arbitrary characteristic and $G$ will denote a smooth linear algebraic group defined over $k$.
Starting from Section~\ref{sect7-10}, we will assume that $\Char(k) \neq 2$, but for now $k$ is arbitrary.

Let $X$ be a reduced and absolutely irreducible algebraic variety equipped 
with an action of $G$ over $k$. A rational map $\pi \colon X \dasharrow Y$ is called a Rosenlicht quotient map for the $G$-action on $X$ if

\begin{itemize}
\item $\pi \circ g = \pi$ for every $g \in G(\cl{k})$,
\item $Y$ is reduced and irreducible, 
\item $k(Y) = k(X)^{G}$ and 
\item $\pi$ is induced by the inclusion of fields $k(X)^{G} \hookrightarrow k(X)$. 
\end{itemize}

It is clear from this definition that a Rosenlicht quotient map exists for every $G$-action: just let
$Y$  be any variety with function field $k(Y) = k(X)^G$, and $\pi \colon X \dasharrow Y$ be the rational 
map induced by the inclusion $k(X)^{G} \hookrightarrow k(X)$.
Note that $Y$ (or $\pi$) is often called {\em the rational quotient} in the literature (see e.g. \cite[2.4]{popov1994}); we will refer to it as
``the Rosenlicht quotient" in this paper in order not to overuse the term ``rational". 

If $\pi \colon X \dasharrow Y$ is a Rosenlicht quotient map, then by a theorem of Rosenlicht \cite[Theorem 2]{rosenlicht} there exist a $G$-invariant 
open $k$-subvariety $X_0 \subset X$ and an open $k$-subvariety $Y_0 \subset Y$ 
such that $\pi$ restricts to a regular map $\pi_0 \colon X_0 \to Y_0$ and 
\begin{equation} \label{e.rosenlicht}
\text{for any $x \in X_0(\overline{k})$, the fiber $\pi_0^{-1}(\pi_0(x))$ equals the $G(\cl{k})$-orbit of $x$.}
\end{equation}
For a modern proof of Rosenlicht's theorem, see~\cite[Section 7]{bdr}. The following Lemma is readily deduced from Rosenlicht's Theorem.

\begin{lem} \label{lem.rosenlicht}
Consider an action of $G$ on $X$ as above, and
let $\pi \colon X \to Z$ be a $G$-equivariant morphism, where $G$ acts trivially on $Z$. Suppose that

(i) there exists a dense open subvariety $Z_{0} \subset Z$ such that for
any $z \in Z_{0}(\overline{k})$ the fiber $\pi ^{-1}(z)=G \cdot x$ for
some $x \in X(\overline{k})$, and

(ii) $\pi $ is generically smooth (which is automatic in characteristic
$0$).

Then $\pi $ is a Rosenlicht quotient for the $G$-action on $X$. In
particular, $\pi $ induces an isomorphism between $k(Z)$ and
$k(X)^{G}$.
\end{lem}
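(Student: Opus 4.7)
The plan is to verify that $\pi$ satisfies the four conditions defining a Rosenlicht quotient map, with $Y = Z$. The first condition, $\pi \circ g = \pi$ for every $g \in G(\cl{k})$, is immediate from the $G$-equivariance of $\pi$ together with the triviality of the $G$-action on $Z$. The hypothesis that $Z$ is reduced and irreducible comes with the setup (and is implicit in the phrase ``generically smooth''). The real content of the lemma is showing that the induced inclusion $\pi^{*} \colon k(Z) \hookrightarrow k(X)^{G}$ is an equality; once this is known, the last condition (that $\pi$ is the rational map induced by $k(X)^{G} \hookrightarrow k(X)$) is automatic.

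First I would show that $k(X)^{G}/k(Z)$ is algebraic by a transcendence-degree count. By Rosenlicht's theorem, $\operatorname{trdeg}_{k} k(X)^{G} = \dim X - d$, where $d$ is the dimension of a $G$-orbit in general position on $X$. Hypothesis (i) identifies such a generic orbit with a fiber of $\pi$, so generic-fiber dimension yields $d = \dim X - \dim Z$. Hence $\operatorname{trdeg}_{k} k(X)^{G} = \dim Z = \operatorname{trdeg}_{k} k(Z)$, and the extension $k(X)^{G}/k(Z)$ is algebraic.

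Next I would show that $k(Z)$ is algebraically closed in $k(X)$; combined with the previous step this forces $k(X)^{G} = k(Z)$. By the standard equivalence for dominant morphisms of integral varieties, algebraic closedness of $k(Z)$ in $k(X)$ is equivalent to geometric integrality over $k(Z)$ of the generic fiber $X_{\eta}$ of $\pi$. Hypothesis (i) says that over a dense open of $Z_{\cl{k}}$ every fiber is a single $G_{\cl{k}}$-orbit, hence irreducible; this forces the geometric generic fiber to be irreducible. Hypothesis (ii) says the generic fiber is smooth over $k(Z)$, hence geometrically reduced. Combining these yields geometric integrality of $X_{\eta}$, as required.

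The main obstacle is the second step: cleanly translating the pointwise orbit condition (i) into geometric irreducibility of the generic fiber, and combining this with generic smoothness (ii) in a form that directly yields algebraic closedness of $k(Z)$ in $k(X)$. Both ingredients are standard, but they must be assembled carefully --- in particular, the role of hypothesis (ii) is only to rule out inseparability phenomena that are invisible over $\cl{k}$-points. Once geometric integrality of $X_{\eta}$ is in hand, the remainder of the argument is just the dimension count above together with the standard characterization of algebraic closedness in the function-field extension.
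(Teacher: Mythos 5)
Your second step proves something that is false in the generality of the lemma, so the argument has a genuine gap. The paper's standing assumption (Section 2) is only that $G$ is a \emph{smooth} linear algebraic group; it need not be connected, and the lemma is in fact applied (via \Cref{rmk.rosenlicht} and \Cref{stabilizer}) to finite and disconnected groups such as $\mu_n$, $\SL_n\rtimes\mu_2$ and $G_2\wr\mu_8$. For disconnected $G$ a single $G(\cl k)$-orbit need not be irreducible, so your inference ``fiber $=$ one orbit $\Rightarrow$ fiber irreducible $\Rightarrow$ geometric generic fiber irreducible'' breaks down, and with it the claim that $k(Z)$ is algebraically closed in $k(X)$. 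Concretely, take $k$ of characteristic $0$, $G=\bbZ/2\bbZ$ acting on $X=\bbA^1$ by $x\mapsto -x$, and $\pi\colon X\to Z=\bbA^1$, $\pi(x)=x^2$. Hypotheses (i) and (ii) hold and the conclusion of the lemma holds ($k(Z)=k(x^2)=k(X)^G$), but the geometric generic fiber consists of two points and $k(Z)$ is \emph{not} algebraically closed in $k(X)$. Indeed your strategy aims at the wrong target: the field you must identify with $k(Z)$ is $k(X)^G$, which for finite $G$ is typically a proper algebraic (even Galois) subfield of $k(X)$, so no argument establishing algebraic closedness of $k(Z)$ in $k(X)$ can be correct here. (A smaller point: the ``standard equivalence'' you quote is only an implication in positive characteristic -- geometric integrality of the generic fiber implies algebraic closedness, not conversely -- though you only use the correct direction.) Your first step, the transcendence-degree count showing $k(X)^G$ is algebraic over $k(Z)$, is fine, and for \emph{connected} $G$ your route can be completed (irreducibility of orbits plus constructibility of geometric irreducibility in fibers, and generic smoothness to kill inseparability); but connectedness cannot be assumed here.

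The paper avoids this issue by never comparing $k(Z)$ with $k(X)$ directly: it takes an abstract Rosenlicht quotient $\mu\colon X\dasharrow Y$ (so $k(Y)=k(X)^G$ by construction), factors $\pi=\alpha\circ\mu$ by the universal property, and shows $\alpha\colon Y\dasharrow Z$ is birational. Hypothesis (i), combined with Rosenlicht's theorem that the fibers of $\mu$ over a dense open set are exactly the orbits, gives a bijection on $\cl k$-points between dense opens of $Y$ and $Z$; in characteristic $0$ this already makes $\alpha$ birational, and in positive characteristic it shows $k(Y)/k(Z)$ is purely inseparable, after which hypothesis (ii) (generic smoothness of $\pi$, hence of $\alpha$) forces $\alpha$ to be birational. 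If you want to keep a function-field-style argument valid for disconnected $G$, you would have to replace ``$k(Z)$ algebraically closed in $k(X)$'' by a comparison of $k(Z)$ with $k(X)^G$ along these lines, e.g.\ by showing $k(X)^G/k(Z)$ is both purely inseparable (from the point-level bijection) and separable (from (ii)).
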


\begin{proof}
Let $\mu \colon X \dasharrow Y$ be a Rosenlicht quotient map. Since $\pi$ is $G$-equivariant, it factors through $\mu $ as follows:
\[ \xymatrix{  X \ar@{-->}[d]_{\mu} \ar@{->}[rd]^{\pi} &  \\
               Y  \ar@{-->}[r]^{ \alpha \quad }                       &  Z.}  \]
(This is called the universal property of Rosenlicht quotients.) It now suffices to show that $\alpha$ is 
a birational isomorphism. Choose open subvarieties $X_0 \subset X$ and $Y_0 \subset Y$ as in Rosenlicht's theorem~\eqref{e.rosenlicht} and such that $\alpha$ is regular on $Y_0$. Then (i) tells us that $\alpha$ induces a bijection between $Y_0(\overline{k})$ and 
$Z_0(\overline{k})$ for some dense open subvariety 
$Z_0 \subset Z$. 
In characteristic $0$ this implies that $\alpha$ is a birational isomorphism, and we are done.

In finite characteristic the fact that $\alpha$ induces a bijection between $Y_0(\overline{k})$ and 
$Z_0(\overline{k})$ only tells us that the field extension $k(Y)/k(Z)$ induced by $\alpha$ is purely inseparable, so we need to use condition (ii) to finish the proof. 
By (ii), $\pi$ is generically smooth and hence, so is $\alpha$. This implies that $\alpha$ is a birational isomorphism, as desired.
\end{proof}

\begin{rmk}\label{rmk.rosenlicht}
Consider a generically free action of $G$ on a variety $X$ defined over $k$. By \cite[Theorem 4.7]{berhuy2003essential}, there exists a dense $G$-invariant open subvariety $X_0 \subset X$ which is the total space of 
a $G$-torsor $\pi \colon X_0 \to Y$ over some $k$-variety $Y$. Conditions (i) and (ii) of~\Cref{lem.rosenlicht} are satisfied, because $G$ is smooth and $\pi$ is a $G$-torsor. It follows that $\pi \colon X_0 \to Y$ (viewed as a rational 
map $X \dasharrow Y$) is a Rosenlicht quotient for the $G$-action on $X$.
\end{rmk}

\section{Preliminaries on the Noether Problem}
\label{prelim}

In this section we collect several known results on the Noether Problem for future use. Given two $k$-varieties, $X$ and $Y$, we will write 
$X\sim Y$ if $X$ and $Y$ are birationally isomorphic over $k$.

Recall that a smooth linear algebraic group $G$
is called {\em special} if $H^1(K, G) = \{ 1 \}$ for every field $K$ 
containing $k$. Special groups were introduced by Serre~\cite{serre1958espaces}; over an algebraically closed field of characteristic $0$
they were classified by Grothendieck~\cite{grothendieck1958torsion}. % In particular, $\on{SL}_n$ and $\on{Sp}_{2n}$ are special for every $n \geqslant 1$.

\begin{lemma} \label{lem.special} 
If $G$ is special and stably rational, then
$BG$ is stably rational.
\end{lemma}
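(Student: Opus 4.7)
Proof strategy: Pick any generically free $k$-representation $\rho\colon G \hookrightarrow \GL(V)$; by the no-name lemma cited in the introduction, stable rationality of $V/G$ does not depend on the choice of $\rho$. The plan is to show that $V$ is stably birational to $V/G \times G$, and then conclude using that $V$ is rational and $G$ is stably rational.

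First I would invoke \Cref{rmk.rosenlicht}: there is a dense $G$-invariant open subvariety $V_0 \subset V$ and a $k$-variety $Y$ such that the Rosenlicht quotient map $\pi\colon V_0 \to Y$ is a $G$-torsor. In particular, $Y$ is birationally equivalent to $V/G$, so $k(Y) = k(V)^G$. Let $K = k(Y)$ and consider the generic fiber of $\pi$; this is a $G_K$-torsor over $\Spec K$, i.e.\ a class in $H^1(K, G)$. Since $G$ is special and $K \supset k$, this class is trivial. Unraveling what this means: the generic fiber is isomorphic to $G_K$ as a $G_K$-torsor, which translates (after spreading out) into a birational isomorphism
\[
V \;\sim\; V_0 \;\sim\; Y \times_k G \;\sim\; V/G \times_k G
\]
over $k$.

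Next I would leverage this birational identity. By hypothesis $G$ is stably rational, so there exists $r \geqslant 0$ with $G \times \bbA^r \sim \bbA^{\dim G + r}$. Multiplying the displayed birational equivalence by $\bbA^r$ gives
\[
V \times \bbA^r \;\sim\; V/G \times G \times \bbA^r \;\sim\; V/G \times \bbA^{\dim G + r}.
\]
On the other hand $V \cong \bbA^{\dim V}$ is rational, so the left-hand side is birational to $\bbA^{\dim V + r}$. Thus $V/G \times \bbA^{\dim G + r} \sim \bbA^{\dim V + r}$, which is exactly the statement that $V/G$ is stably rational. Since this holds for our chosen generically free representation, by definition $BG$ is stably rational.

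The only step requiring care is the passage from ``the generic fiber $G$-torsor is trivial'' to ``$V_0$ is birational to $Y \times G$ as a $G$-variety'': one picks a $K$-point of the generic fiber, spreads it out to a rational section $Y \dashrightarrow V_0$ of $\pi$, and uses it to define the birational $G$-equivariant isomorphism $Y \times G \dashrightarrow V_0$ sending $(y,g)$ to $g \cdot \sigma(y)$. This is standard but is the substantive use of the specialness hypothesis; everything else is formal.
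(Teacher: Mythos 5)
Your argument is correct: the paper itself gives no in-text proof of \Cref{lem.special}, only citations to \cite[Proposition 4.7]{colliot2007rationality} and \cite[Remark 3.2]{florence2018genus0}, and your torsor-section argument (generic fiber of the $G$-torsor $V_0\to Y$ is trivial by specialness, hence $V\sim V/G\times G$, then use stable rationality of $G$ and rationality of $V$) is exactly the standard reasoning behind those references. No gaps; the one step you flag as needing care (spreading out the rational section to get a $G$-equivariant birational isomorphism $Y\times G\dashrightarrow V_0$) is handled correctly.
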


\begin{proof} See \cite[Proposition 4.7]{colliot2007rationality} or \cite[Remark 3.2]{florence2018genus0}.
\end{proof}

\begin{example} \label{ex.special}
It is known that the groups $G = \on{GL}_n, \on{SL}_n, \on{Sp}_{2n}$ are special for every $n \geqslant 1$,
and so are the groups $G = \on{Spin}_n$ for $n \leqslant 6$. Thus $BG$ is stably rational for these $G$.
\end{example}

Let $P \to \Sym_n$ be a permutation representation of a linear algebraic group $P$.
If $G$ is a another linear algebraic group, this representation gives rise to the wreath product
$G \wr P$, which is defined as the semidirect product $G^n\rtimes P$ via the permutation action of $P$ on $G^n$.

\begin{lemma} \label{birational} Let $G$, $G_1$, $H$, and $P$ be linear algebraic groups over $k$, and $P \to \Sym_n$ be a permutation representation.

\smallskip
(a)	If $BG$ and $BH$ are stably birational, then $B(G \wr P)$ and $B(H \wr P)$ are stably birational. 

\smallskip
(b) If $BG$ is stably rational, then $B(G \wr P)$ is stably birational to $BP$.

\smallskip
(c) If $BG$ is stably rational, then $B(G \times G_1)$ is stably birational to $B(G_1)$. 
\end{lemma}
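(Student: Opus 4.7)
My plan is to prove (a) in full, then deduce (b) and (c) as special cases.

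For (a), I would choose a generically free representation $V$ of $G$, a generically free representation $W$ of $H$, and a generically free representation $U$ of $P$. By the no-name lemma, after replacing $V$ by $V\oplus k^r$ and $W$ by $W\oplus k^r$ with trivial action on the added coordinates, the hypothesis that $BG$ and $BH$ are stably birational lets me assume that $V/G$ and $W/H$ are in fact birationally isomorphic, via some rational map $\phi\colon V/G\dasharrow W/H$.

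Next I would form the representation $V^n\oplus U$ of $G\wr P = G^n\rtimes P$, with $G^n$ acting component-wise on $V^n$ and trivially on $U$, and $P$ permuting the $V$-factors and acting on $U$ via its given representation. A direct stabilizer computation at a generic point shows this representation is generically free, so by \Cref{rmk.rosenlicht} its Rosenlicht quotient realizes $B(G\wr P)$. Taking the quotient by the normal subgroup $G^n$ first yields $(V/G)^n\times U$ birationally, with residual $P$-action permuting the $V/G$-factors and acting on $U$ as before; a final quotient by $P$ realizes $B(G\wr P)$ birationally as $((V/G)^n\times U)/P$. The same construction with $W$ in place of $V$ and $H$ in place of $G$ realizes $B(H\wr P)$ birationally as $((W/H)^n\times U)/P$.

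The key observation is that $\phi^n\colon (V/G)^n\dasharrow (W/H)^n$, applied coordinate-wise, is automatically $P$-equivariant, since $P$ permutes the factors in the same way on both sides. Taking the product with the identity on $U$ and passing to $P$-quotients therefore gives a birational isomorphism $((V/G)^n\times U)/P\sim ((W/H)^n\times U)/P$, proving (a).

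For (b), I apply (a) with $H=\{1\}$: the assumption that $BG$ is stably rational is exactly the statement that $BG$ and $B\{1\}$ are stably birational, and $\{1\}\wr P = P$. For (c), I apply (b) with $n=1$ and $P=G_1$, noting that $G\wr G_1 = G\times G_1$ when $G_1$ acts trivially on the one-element set. The only mildly delicate point is the $P$-equivariance of $\phi^n$, which holds precisely because $P$ acts on both $(V/G)^n$ and $(W/H)^n$ by the same permutation; everything else is bookkeeping with Rosenlicht quotients and the no-name lemma.
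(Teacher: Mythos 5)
Your proposal is correct and follows essentially the same route as the paper: pad $V$ and $W$ by trivial summands so that $V/G\sim W/H$, add a generically free representation of $P$, and compare the Rosenlicht quotients in stages, using that the induced birational isomorphism $(V/G)^n\dasharrow (W/H)^n$ is $P$-equivariant (a point the paper leaves implicit in its chain of equivalences), then obtain (b) by taking $H$ trivial and (c) as the case $P=G_1$ with the trivial permutation representation. The only quibbles are cosmetic: the padding dimensions $r$ for $V$ and $W$ need not be equal, and the no-name lemma is not actually needed for this step, since adding a trivial summand just multiplies the quotient by an affine space.
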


\begin{proof}
(a)	Let $V$ be a generically free $G$-representation, and $W$ be a generically free $H$-representation. By our
assumption the Rosenlicht quotients $V/G$ and $W/H$ are stably birationally equivalent, say $V/G\times \A^r\sim W/H\times \A^s$. After replacing $V$ by $V\oplus \A^r$ and $W$ by $W\oplus \A^s$, where $G$ acts trivially on $\A^r$ and $H$ acts trivially on $\A^s$, we may assume that $V/G\sim W/H$.

The product actions of $G^n$ on $V^n$ and of $H^n$ on $W^n$ naturally extend to linear representations 
\[ G \wr P \to \GL(V^n) \quad \text{and} \quad H \wr P \to \GL(W^n), \]
respectively, where $P$ acts on $V^n$ and $W^n$ by permuting the factors. Now let $P \to \GL(Z)$ be some generically free linear representation of $P$. 
Then the representations $G^n \rtimes P$ on $V^n \times Z$ and of $H^n \rtimes P$ on $W^n \times Z$ are generically free. 
Comparing the Rosenlicht quotients 
$(V^n \times Z)/(G \wr P)$ and $(W^n \rtimes Z)/(H \wr P)$,
we obtain
\[ (V^n \times Z)/(G \wr P)\sim ((V/G)^n \times Z)/ P \sim  ((W/H)^n \times Z)/P \sim (W^n \rtimes Z)/(H \wr P) , \]
as desired.
	
(b)	Letting $H$ be the trivial group in part (a), we deduce that $B(G^n\rtimes P)$ is stably birational to $BP$.

(c) is a special case of (b) with $P = G_1$, equipped with the trivial permutation representation $P \to \Sym_1$. 
\end{proof}

    \begin{lemma}\label{stabilizer}
	Let $G \to \GL(V)$ be a finite-dimensional representation defined over $k$. Suppose there exists a $k$-point $v_0\in V$ such that the scheme-theoretic stabilizer 
	$H$ of $v_0$ in $G$ is smooth, and the $G$-orbit of $v_0$ is dense in $V$. Then $BG$ and $BH$ are stably birationally equivalent.	
	\end{lemma}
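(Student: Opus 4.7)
The plan is to consider a generically free $G$-representation $W$ and to compute the Rosenlicht quotient of $V \times W$ (with the diagonal $G$-action) in two different ways. Observe first that the restriction of $W$ to $H$ is generically free since $H \subseteq G$, and that the diagonal $G$-action on $V \oplus W$ is generically free since its restriction to the subrepresentation $W$ already is.

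For the first description I would exploit the dense orbit in $V$. Let $U := G \cdot v_0 \subseteq V$. As a dense orbit of a smooth algebraic group action, $U$ is open in $V$, and because the scheme-theoretic stabilizer $H$ is smooth, the orbit map induces a $G$-equivariant isomorphism $G/H \xrightarrow{\sim} U$. Consequently the map
\[ G \times^H W \longrightarrow U \times W, \qquad [g,w] \longmapsto (gv_0, gw), \]
is a $G$-equivariant isomorphism. Since $U$ is dense open in $V$ and $(G \times^H W)/G = W/H$, this yields
\[ (V \times W)/G \sim W/H. \]

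For the second description I would apply the no-name lemma (\cite[Lemma~2.1]{reichstein2006birational}) to the $G$-representation $V \oplus W$, in which the subrepresentation $W$ is already generically free, to obtain
\[ (V \times W)/G \sim (W/G) \times V \sim (W/G) \times \A^{\dim V}. \]

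Combining the two descriptions, $W/H \sim (W/G) \times \A^{\dim V}$, so $W/H$ and $W/G$ are stably birationally equivalent, and hence so are $BH$ and $BG$. The main technical subtlety will be justifying the $G$-equivariant isomorphism $G/H \xrightarrow{\sim} U$, which depends essentially on the smoothness of $H$: without it the orbit map could be inseparable and the induced morphism $G/H \to U$ only purely inseparable. This issue does not arise in characteristic zero, where the argument is essentially the standard ``slice method''.
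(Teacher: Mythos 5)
Your proof is correct, and it arrives at the same pivotal identification as the paper --- $(V\times W)/G \sim W/H$ for an auxiliary generically free $G$-representation $W$ --- but by a different mechanism. The paper stays entirely inside its Rosenlicht-quotient framework: it checks, using \Cref{lem.rosenlicht}, that a suitable map $V_0\times W_0 \to W/H$ is a Rosenlicht quotient for the $G$-action, by verifying that its fibers are single $G$-orbits (this uses that $v_0$ is fixed by $H$) and that it is generically smooth, the latter being exactly where the smoothness of $H$ enters, via the $H$-torsor $W_0\to W/H$ of \Cref{rmk.rosenlicht}. You instead invoke the homogeneous-space isomorphism $G/H\simeq U$, the untwisting isomorphism $G\times^H W\simeq U\times W$, and the no-name lemma; note that the no-name step is logically superfluous, since $V\oplus W$ is itself a generically free $G$-representation, so $(V\times W)/G$ is already a model of $BG$ by definition. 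Your route gives a cleaner structural (``slice'') picture, while the paper's route confronts the characteristic-$p$ separability issues explicitly rather than delegating them to the standard facts you quote, namely that $G/H\to U$ is an isomorphism and that $k(G\times^H W)^G = k(W)^H$ (equivalently, that $G\times W\to G\times^H W$ is an $H$-torsor). One correction of emphasis: the isomorphism $G/H\simeq U$ holds for the scheme-theoretic stabilizer even when $H$ fails to be smooth --- the orbit morphism $G/H\to V$ is an immersion and $G/H$ is reduced because $G$ is smooth --- so this is not where smoothness of $H$ is essential; it is needed so that $W/H$ (defined through $\overline{k}$-point invariants) is a genuine model of $BH$ and so that the torsor quotients in your chain are separable, which is precisely the role smoothness plays in the paper's generic-smoothness check.
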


	When $G$ is reductive and $\on{char}k=0$, this lemma reduces to~\cite[Proposition 3.13]{colliot2007rationality}.  
	The following argument works in arbitrary characteristic.  
	
		\begin{proof} Let $G \to \GL(W)$ be a generically free representation of $G$. Denote the open orbit of $v_0$ in $V$ by $V_0$ and
    the Rosenlicht quotient map for the $H$-action on $W$ by $\pi \colon W \dasharrow W/H$. 
	After possibly replacing $W/H$ by a dense open subvariety, we can choose an $H$-invariant dense open subvariety 
	$W_0 \subset W$ such that $\pi$ restricts to a morphism $W_0 \to W/H$ whose fibers are exactly the $G$-orbits in $W_0$; see (\ref{e.rosenlicht}). In fact, by \Cref{rmk.rosenlicht}, we may assume that $\pi \colon W_0 \to W/H$ is an $H$-torsor.
	
	We claim that $\phi \colon V_0 \times W_0 \to W/H$ given by $(v, w) \to \pi (w)$ is a Rosenlicht quotient map for
	the $G$-action on $V_0 \times W_0$. If we establish this claim, then 
	\[ k(V \times W)^G = k(V_0 \times W_0)^G = k(W/H) = k(W)^H, \]
	and the lemma will follow. By~\Cref{lem.rosenlicht}, it suffices to show that

    \smallskip
    (i) $\phi^{-1}(\pi(w))$ is a single $G$-orbit for any $w \in W_0(\overline{k})$, and
    
    \smallskip
    (ii) $\phi$ is generically smooth.
    
    \smallskip
    \noindent
    To prove (i), suppose $\phi(v_1, w_1) = \phi(v_2, w_2)$ for some $(v_1, w_1)$ and $(v_2, w_2)$ in $V_0 \times W_0$.
    Our goal is to show that $(v_1, w_1)$ and $(v_2, w_2)$ lie in the same $G$-orbit. After translating these points by suitable elements of $G$, we may assume that $v_1 = v_2 = v_0$. Since $\pi$ is an $H$-torsor and
    \[ \pi(w_1) = \phi(v_0, w_1) = \phi(v_0, w_2) = \pi(w_2), \]
    we conclude that $w_2 = h(w_1)$ for some $h \in H(\cl{k})$. Since $v_0$ is stabilized by $H$, 
    we have $h(v_0, w_1) = (v_0, w_2)$, as desired.
    
    To prove (ii), note that $\pi$ is the composition of the projection map $p \colon V_0 \times W_0 \to W_0$ 
    and the Rosenlicht quotient map $\pi \colon W_0 \to W/H$. Clearly, $p$ is smooth. Moreover, $\pi$ is also smooth, because $H$ is smooth and $\pi$ is an $H$-torsor. Thus $\phi = \pi \circ p$ is smooth, as desired.
    This completes the proof of (ii) and thus of \Cref{stabilizer}.
\end{proof}

\begin{example} \label{ex.mu} % The following simple example illustrating \Cref{stabilizer} will be useful to us in the sequel.
Consider the 1-dimensional representation $V$ of $G = \bbG_m$, where $t \in \bbG_m$ acts on $V$ via scalar multiplication by $t^n$, where $n$ 
is not divisible by $\Char(k)$. Taking $v_0$ to be any non-zero vector in $V$, we see that the stabilizer of $v_0$ in $G = \G_m$ is $H = \mu_n$. 
Since $\bbG_m = \on{GL}_1$ is special, $B \bbG_m$ is stably rational over $k$. By~\Cref{stabilizer}, so
is $B \mu_n$. 
\end{example}

For more sophisticated applications of \Cref{stabilizer}, see~\cite[Section 4]{colliot2007rationality}.

\begin{rmk} 
Representations of connected groups which admit a dense open orbit have been studied by M.~Sato and T.~Kimura \cite{sato1977classification} (over $\C$). 
They referred to such representations as prehomogeneous vector spaces. 
\end{rmk}

\begin{lemma}\label{affine} {\rm(}cf.~\cite[Corollary to Lemma 2.2]{bogomolov1986stable}{\rm )}
    Let $V$ be a linear representation of $G$, and consider $V$ as a vector group scheme over $k$. Then $B(V\rtimes G)$ is stably birational to $BG$.
\end{lemma}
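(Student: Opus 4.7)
The plan is to construct an explicit generically free linear representation of $V \rtimes G$, and identify its Rosenlicht quotient as something stably birational to $W/G$, where $W$ is a generically free representation of $G$.

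Let $W$ be a generically free linear representation of $G$, so that $W/G$ represents $BG$. I would form the $(V \rtimes G)$-representation
\[ U := V \oplus k \oplus W \]
with action
\[ (v_0, g) \cdot (v, t, w) := (g \cdot v + t v_0,\, t,\, g \cdot w). \]
This is the standard realization of $V \rtimes G$ inside $\GL(V \oplus k)$, twisted by the $G$-module $W$; a short direct check confirms it is a genuine linear action, and it is generically free, since on the open locus $t \neq 0$ with $w$ in general position, $g \cdot w = w$ forces $g = 1$ and then $t v_0 = 0$ forces $v_0 = 0$.

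To identify the Rosenlicht quotient $U/(V \rtimes G)$, I would pick a $G$-invariant dense open $W_0 \subset W$ on which $\pi \colon W_0 \to W_0/G$ is a $G$-torsor (available via \Cref{rmk.rosenlicht}), and set $U_0 := V \oplus \bbG_m \oplus W_0$. Consider the smooth morphism
\[ \phi \colon U_0 \longrightarrow \bbG_m \times (W_0/G), \qquad (v, t, w) \longmapsto (t, \pi(w)), \]
which is $(V \rtimes G)$-invariant. The main thing to verify is that each fiber of $\phi$ is a single $(V \rtimes G)$-orbit: given two points in $\phi^{-1}(t_0, \pi(w_0)) = V \times \{t_0\} \times G \cdot w_0$, there is a unique $g \in G$ equating their $W$-coordinates (since $\pi$ is a $G$-torsor), and then a unique $v_0 \in V$ equating their $V$-coordinates (since the $V$-action $v \mapsto v + t_0 v_0$ is transitive when $t_0 \neq 0$). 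Then \Cref{lem.rosenlicht} identifies $\phi$ as a Rosenlicht quotient map.

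Therefore $U/(V \rtimes G) \sim \bbG_m \times W/G$. Since $\bbG_m$ is birational to $\A^1$, this is stably birational to $W/G$, so $B(V \rtimes G)$ is stably birational to $BG$. The conceptual step is finding the right representation $U$, with an extra factor of $k$ that both makes the $V$-action linear and provides the rescaling of the translation needed to guarantee generic freeness and orbit-transitivity; once that is in place, everything else is routine.
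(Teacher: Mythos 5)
Your proof is correct and is essentially the paper's argument: you use the same auxiliary representation $\A^1\oplus V\oplus W$ with the same twisted action $(v_0,g)\cdot(v,t,w)=(gv+tv_0,t,gw)$ and conclude via \Cref{lem.rosenlicht} that the quotient is $\A^1\times W/G$. The only (immaterial) difference is that you apply \Cref{lem.rosenlicht} once to the full group $V\rtimes G$ using the torsor $W_0\to W_0/G$, whereas the paper quotients in two stages, first by $V$ via the projection to $\A^1\oplus W$ and then by $G$.
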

	
\begin{proof}
    Let $W$ be a generically free representation of $G$, and consider the $G$-representation $V_0:=\A^1\oplus V\oplus W$, where $G$ acts trivially on $\A^1$. We let the vector group scheme $V$ act linearly on $\A^1\oplus V$ by \[v\cdot (\lambda,v'):=(\lambda,\lambda v+v')\] and trivially on $W$. This gives $V_0$ the structure of a $V\rtimes G$-representation. Since $G$ acts generically freely on $W$ and $V$ acts generically freely on $\A^1\oplus V$, $V_0$ is generically free as a $V\rtimes G$-representation. It suffices to show that $V_0/(V\rtimes G)$ is stably birational to $W/G$.
    
    The projection map $\pi:V_0\to \A^1\oplus W$ is $V\rtimes G$-equivariant. Moreover, $V$ acts trivially on $\A^1\oplus W$ and simply transitively on the fibers of points in $(\A^1\setminus\set{0})\times W$. By \Cref{lem.rosenlicht}, $\pi$ is the Rosenlicht quotient map for the $V$-action on $V_0$. Hence
    \[V_0/(V\rtimes G)\sim (\A^1\oplus W)/G \sim \A^1 \times W/G,\] as desired.
\end{proof}	

\section{The Noether Problem for $\SL_n \rtimes \, (\bbZ/{2}\bbZ)$} 
\label{sect.sln}

Let $\mathbb{Z}/2 \mathbb Z = \langle \tau \rangle$ be the cyclic group of order $2$.
In this section we will study the Noether Problem for the group
$\SL_n \rtimes (\mathbb Z/ 2 \bbZ)$, where $n \geqslant 1$ and $\tau$ acts on $\SL_n$ by $A \to (A^{-1})^T$.
Our main result is as follows.

\begin{prop} \label{sln}
$B(\SL_n \rtimes (\mathbb Z/ 2 \bbZ))$ is stably rational for every $n \geqslant 1$.
\end{prop}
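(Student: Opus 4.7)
The plan is to exhibit a concrete generically free linear representation $W$ of $G := \SL_n \rtimes (\bbZ/2\bbZ)$ and to verify directly that the Rosenlicht quotient $W/G$ is rational.

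I would take $W := \on{Mat}_n \oplus \on{Mat}_n$, with $A \in \SL_n$ acting by $A \cdot (X, Y) = (AX, YA^{-1})$ and $\tau$ acting by $\tau \cdot (X, Y) = (Y^T, X^T)$. A direct check that $\tau A \tau^{-1}$ acts as $(A^{-1})^T$ shows $W$ is a linear $G$-representation. Generic freeness follows from a stabilizer calculation: when $X$ is invertible, $A \cdot (X, Y) = (X, Y)$ forces $A = I$; and an element of the form $A\tau$ fixes $(X, Y)$ only when $YX$ is symmetric and $\det X = \det Y$, a proper closed condition.

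Next I would compute the Rosenlicht quotient by $\SL_n$. The matrix $M := YX$ and the scalar $s := \det X$ are $\SL_n$-invariant, and the morphism $(X, Y) \mapsto (M, s)$ is dominant onto $\on{Mat}_n \times \A^1$; on the open set $\{\det X \neq 0\}$ the fiber over $(M, s)$ is the single $\SL_n$-orbit $\{(X, M X^{-1}) : \det X = s\}$. By \Cref{lem.rosenlicht} this is a Rosenlicht quotient map, so $W/\SL_n$ is rational of dimension $n^2 + 1$, with function field $k(M_{ij}, s)$. The residual $\tau$-action on the quotient reads $(M, s) \mapsto (M^T, \det(M)/s)$.

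It remains to show that $k(M_{ij}, s)^\tau$ is purely transcendental. Assuming $\on{char}(k) \neq 2$, split $\on{Mat}_n = \on{Sym}_n \oplus \on{Alt}_n$; then $\tau$ is the identity on $\on{Sym}_n$ and is $-1$ on $\on{Alt}_n$. For $n \geq 2$, choosing antisymmetric coordinates $q_1, \ldots, q_l$ with $l = n(n-1)/2$ one has $k(\on{Alt}_n)^\tau = k(q_1^2, q_2/q_1, \ldots, q_l/q_1)$, purely transcendental, and hence so is $k(\on{Mat}_n)^\tau$ (the case $n = 1$ is trivial since $\on{Alt}_1 = 0$). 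Finally, $u := s + \det(M)/s$ is $\tau$-invariant and $s$ satisfies $s^2 - us + \det M = 0$, giving $k(M_{ij}, s)^\tau = k(\on{Mat}_n)^\tau(u)$, one additional transcendental generator. This proves $W/G$ is rational, a fortiori stably rational. The only non-routine step is choosing the representation $W$; once matrix multiplication $M = YX$ is identified as the key $\SL_n$-invariant, the $\tau$-quotient reduces cleanly via the symmetric/antisymmetric decomposition together with splitting a single quadratic.
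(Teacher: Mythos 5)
Your construction of $W=\Mat_{n\times n}\oplus\Mat_{n\times n}$ and the computation of the $\SL_n$-quotient via $(X,Y)\mapsto (M,s)=(YX,\det X)$ are fine, but the last step contains a genuine error. Write $F=k(M_{ij})$ and $L=F(s)$, so that $\tau$ acts by $M\mapsto M^T$, $s\mapsto \det(M)/s$, and set $u=s+\det(M)/s$. Your claim $L^{\tau}=F^{\tau}(u)$ is false for every $n\geq 2$. Indeed, since $\tau$ is nontrivial on $F$ one has $[F(u):F^{\tau}(u)]=2$, and $[L:F(u)]=2$ (the $F$-linear substitution $s\mapsto\det(M)/s$ is a nontrivial automorphism of $L$ fixing $F(u)$, and $s$ satisfies $s^2-us+\det M=0$); hence $[L:F^{\tau}(u)]=4$, while $[L:L^{\tau}]=2$, so $L^{\tau}$ is a quadratic extension of $F^{\tau}(u)$, not equal to it. Concretely, taking an alternating coordinate such as $q_1=m_{12}-m_{21}$, the element $w:=q_1\bigl(s-\det(M)/s\bigr)$ is $\tau$-invariant but cannot lie in $F^{\tau}(u)$: otherwise $s-\det(M)/s$, and hence $s$, would lie in $F(u)$. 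The mistake is that you computed the invariants of the action on $M$ and on $s$ separately, whereas this twisted involution has ``mixed'' invariants such as $w$.

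The gap is not cosmetic. One finds $L^{\tau}=F^{\tau}(u,w)$ with the single relation $w^2=q_1^2\,(u^2-4\det M)$, so $L^{\tau}$ is the function field of a conic over $F^{\tau}$, and rationality over $F^{\tau}$ would require a rational point, equivalently that $\det M$ be a norm from $F=F^{\tau}(q_1)$. This fails in general: for $n=2$ and $k=\R$ the quaternion algebra $\bigl((m_{12}-m_{21})^2,\,\det M\bigr)$ over $F^{\tau}=k\bigl(m_{11},m_{22},m_{12}+m_{21},(m_{12}-m_{21})^2\bigr)$ specializes at $m_{11}=m_{22}=m_{12}+m_{21}=0$, $(m_{12}-m_{21})^2=-1$ to $(-1,-1)_{\R}$, which is nonsplit; since the Brauer group of a regular scheme injects into that of its function field, the generic conic has no $F^{\tau}$-point. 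So as written your argument does not establish (stable) rationality of $k(W)^{G}$, and repairing it would require a genuinely new idea to deal with this quadratic extension. This is exactly the difficulty the paper's choice of rectangular matrices $V=\Mat_{n\times(n-1)}\times\Mat_{(n-1)\times n}$ avoids: there the $\SL_n$-quotient is just $YX\in\Mat_{(n-1)\times(n-1)}$, with no determinant coordinate, the residual involution is the linear map $Z\mapsto Z^T$, and stable rationality follows immediately from stable rationality of $B(\bbZ/2\bbZ)$ --- an argument that moreover works in every characteristic, whereas your symmetric/alternating splitting already assumes $\on{char}(k)\neq 2$.
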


In the sequel we will write $\Mat_{a \times b}$ for the space of rectangular matrices with $a$ rows and $b$ columns. 
Assume $n \geqslant 2$ and consider the linear representation of $\SL_n \rtimes (\mathbb Z/ 2 \bbZ)$ on 
$V = \Mat_{n \times (n-1)} \times \Mat_{(n-1) \times n}$ 
given by 
\[ \text{$A \colon (X, Y) \mapsto (AX, Y A^{-1})$ for any $A \in \SL_n$
and $\tau \colon (X, Y) \to (Y^T, X^T)$.} \]
Here $X^T$ denotes the transpose of $X$ and similarly for $Y$.
This action is well defined:
\[ \tau (A \cdot (X, Y)) = \big( (A^{-1})^T Y^T, X^T A^T \big) = (A^{-1})^T\cdot \tau(X, Y)  \]
for every $A \in \on{SL}_n$, $X \in \Mat_{n \times (n-1)}$, and $Y \in \Mat_{(n-1) \times n}$. Set
\[ \pi \colon V \to \Mat_{(n-1) \times (n-1)}, \]
where $\pi (X, Y) = YX$. Clearly 
\begin{equation} \label{e.invariant}
\pi(A \cdot(X, Y)) = \pi(X, Y) 
\end{equation}
for any $A \in \on{SL}_n$, $X \in \Mat_{n \times (n-1)}$ and $Y \in \Mat_{(n-1) \times n}$.

\begin{lem} \label{sln1} Suppose $\pi(X, Y)$ is a non-singular $(n-1) \times (n-1)$ matrix for some
$n \geqslant 2$, $X \in \Mat_{n \times (n-1)}(k)$ and $Y\in \Mat_{(n - 1) \times n}(k)$. Then

\smallskip
(a) the $\SL_n$-orbit of $(X, Y)$ in $V$ contains a point of the form $(J, Y')$, where
\[ J = \begin{pmatrix} 
1 & 0 & \ldots & 0 \\
0 & 1 & \ldots & 0 \\
\hdotsfor{4}   \\
0 & 0 & \ldots & 1 \\
0 & 0 & \ldots & 0 \end{pmatrix}  \in \Mat_{n \times (n-1)} \quad \text{and} \quad Y' \in \Mat_{(n-1) \times n}. \]

\smallskip
(b) The scheme-theoretic stabilizer $\Stab_{\SL_n}(X, Y)$ is trivial and
$\pi^{-1} (\pi (X, Y))$ is a single $\on{SL}_n$-orbit.

\smallskip
(c) $\pi$ is a Rosenlicht quotient map for the $\SL_n$-action on $V$. In particular,
$\pi$ induces an isomorphism between $k(\Mat_{(n-1) \times (n-1)})$ and $k(V)^{\SL_n}$.
\end{lem}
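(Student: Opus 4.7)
The plan is to handle parts (a), (b), (c) in order, using only elementary linear algebra.

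For (a), non-singularity of $\pi(X,Y) = YX$ forces $X \in \Mat_{n \times (n-1)}$ to have full column rank $n-1$. Extending the columns of $X$ to a basis of $k^n$ produces $B \in \GL_n$ whose first $n-1$ columns are those of $X$, so $B^{-1} X = J$. Rescaling the last row of $B^{-1}$ by $1/\det(B^{-1})$ does not alter $B^{-1}X = J$ (it touches only the already-zero last row of $J$) and yields $A \in \SL_n$ with $AX = J$; then $Y' := Y A^{-1}$ is the desired second coordinate.

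For (b), I first use (a) to reduce to $X = J$. The scheme-theoretic stabilizer of $J$ in $\SL_n$ consists of matrices fixing $e_1, \ldots, e_{n-1} \in k^n$, which together with $\det = 1$ forces $A = I + v e_n^T$ with $v = (a_1, \ldots, a_{n-1}, 0)^T$. The further requirement $Y' A^{-1} = Y'$ rewrites as $Y' v = 0$. Writing $Y' = (M \mid u)$ with $M = \pi(J, Y')$ and $u \in k^{n-1}$, this becomes $M v' = 0$ for $v' = (a_1, \ldots, a_{n-1})^T$, and non-singularity of $M$ forces $v = 0$; the same equations $v_n = 0$ and $M v' = 0$ cut out only the identity scheme-theoretically, so $\Stab_{\SL_n}(X,Y)$ is trivial. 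For the fiber description, a direct computation shows that $A = I + v e_n^T$ sends $(J, (M \mid u))$ to $(J, (M \mid u - M v'))$; invertibility of $M$ makes this action on $u$ transitive, so every point of $\pi^{-1}(M)$ is $\SL_n$-equivalent to the canonical representative $(J, (M \mid 0))$, and the fiber is a single $\SL_n$-orbit.

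Part (c) then follows by applying \Cref{lem.rosenlicht} with $Z := \Mat_{(n-1) \times (n-1)}$ and $Z_0 \subset Z$ the open subvariety of invertible matrices. The map $\pi$ is $\SL_n$-invariant by \eqref{e.invariant}, condition (i) of the lemma is the fiber statement in (b), and for condition (ii) one computes $d\pi_{(X,Y)}(X', Y') = Y' X + Y X'$, noting that the partial map $Y' \mapsto Y' X$ already surjects onto $\Mat_{(n-1)\times(n-1)}$ whenever $X$ has full column rank; hence $\pi$ is smooth on a non-empty open subset, so it is generically smooth. I do not foresee any significant obstacle: the entire argument reduces to elementary linear algebra over $k$, and the only mildly delicate point is confirming that the stabilizer vanishes scheme-theoretically (and not merely on $\overline{k}$-points), which the explicit equations above make immediate.
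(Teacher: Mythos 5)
Your proof is correct and follows essentially the same route as the paper: reduce to $X=J$ using transitivity of $\SL_n$ on full-rank column frames, note that matrices fixing $J$ differ from the identity only in the last column, and use invertibility of $M=\pi(J,Y')$ to solve the resulting linear system (giving both trivial scheme-theoretic stabilizer and a single orbit in each fiber), then conclude via \Cref{lem.rosenlicht} after checking surjectivity of the differential $Y'\mapsto Y'X$. The only differences are organizational (you separate the stabilizer and transitivity computations where the paper solves one system, and you verify generic smoothness at general $X$ rather than on the slice $\{J\}\times \Mat_{(n-1)\times n}$), so no further comment is needed.
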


\begin{proof} 
(a) is a consequence of the fact that $\SL_n$ acts transitively on $(n-1)$-tuples of linearly independent vectors 
in $k^n$. 

\smallskip
(b) In view of part (a), we may assume that $X = J$. If 
\begin{equation} \label{e.Y} 
Y  = \begin{pmatrix}
y_{1,1} & y_{1,2} & \ldots & y_{1,n -1} & y_{1, n} \\
y_{2,1} & y_{2,2} & \ldots & y_{2,n-1} & y_{2, n} \\
\hdotsfor{5}   \\
y_{n-1, 1}  & y_{n-1, 2} & \ldots & y_{n-1, n} & y_{n-1, n} \\ 
\end{pmatrix},
\end{equation}
then
\begin{equation} \label{e.piJY} \pi(J, Y) = Y J = \begin{pmatrix} 
y_{1,1} & y_{1,2} & \ldots & y_{1,n -1}  \\
y_{2,1} & y_{2,2} & \ldots & y_{2,n-1} \\
\hdotsfor{4}   \\
y_{n-1, 1}  & y_{n-1, 2} & \ldots & y_{n-1, n-1} \\ 
\end{pmatrix}. \end{equation} 
Now suppose $(X', Z)$ is another point in the fiber $\pi^{-1} (\pi(J, Y))$.
We want to show that $(X', Z)$ is an $\SL_n$-translate of $(J, Y)$.
By part (a), we may assume without loss of generality that $X' = J$.
Since we are assuming that $\pi(J, Y) = \pi(J, Z)$, this tells us that
\begin{equation} \label{e.Z} Z  = \begin{pmatrix}
y_{1,1} & y_{1,2} & \ldots & y_{1,n -1} & z_{1, n} \\
y_{2,1} & y_{2,2} & \ldots & y_{2,n-1} & z_{2, n} \\
\hdotsfor{5}   \\
y_{n-1, 1}  & y_{n-1, 2} & \ldots & y_{n-1, n} & z_{n-1, n} \\ 
\end{pmatrix}
\end{equation}
for some $z_{1, n}, \ldots, z_{n-1, n} \in k$.

We claim that the locus $\Lambda$ of solutions to the system
$A \cdot (J, Y) = (J, Z)$,
or equivalently 
\[ \left\{ \begin{array}{l}  
AJ=J \\
ZA=Y,\\
\end{array} \right. \]
is (scheme-theoretically) a single point $A \in \SL_n$. The fact that $\Lambda$ is non-empty
implies that $(J, Y)$ and $(J, Z)$ lie in the same $\SL_n$-orbit. The fact that $\Lambda$ is a single point tells us
that the scheme-theoretic stabilizer of $(J, Y)$ is trivial (just set $Z = Y$ in the claim).

It thus remains to prove the claim. Note that $AJ = J$ if and only if
\begin{equation} \label{e.A} A  = \begin{pmatrix} 
1 & 0 & \ldots & 0 & t_1 \\
0 & 1 & \ldots & 0 & t_2 \\
\hdotsfor{5}   \\
0 & 0 & \ldots & 1 & t_{n-1} \\
0 & 0 & \ldots & 0 & 1  \end{pmatrix} \end{equation}
for some $(t_1, \dots, t_{n-1}) \in \bbA^{n-1}$.
On the other hand, $Z A = Y$ translates to
\[ \left\{ \begin{array}{l}  
y_{1, 1} t_1 + \dots + y_{1, n-1} t_{n-1} + z_{1, n} = y_{1, n} \\
y_{2, 1} t_1 + \dots + y_{2, n-1} t_{n-1} + z_{2, n} = y_{2, n}\\
\hdotsfor{1} \\
y_{n - 1, 1} t_1 + \dots + y_{n-1, n-1} t_{n-1} + z_{n-1, n} = y_{n-1, n} . \end{array} \right. \]
The matrix of this linear system is~\eqref{e.piJY}. This matrix is non-singular by our assumption.
Hence, the system has a unique solution, $(t_1, \dots, t_{n-1})$ in $\bbA^{n-1}$. This completes the proof of the claim and thus of part (b).

\smallskip
(c) In view of~\eqref{e.invariant} and part (b), it suffices to show that
 $\pi$ is generically smooth; see \Cref{lem.rosenlicht}. In other words, we need to check that the differential $d\pi \colon T_v(V) \to
T_\pi (M_{(n-1) \times (n-1)})$ is surjective for $v \in V$ in general position. This is readily seen 
by restricting $\pi$ 
to the affine subspace $\{ J \} \times \Mat_{(n-1) \times n}$ and using the formula~\eqref{e.piJY}.
\end{proof}

\begin{proof}[Proof of \Cref{sln}] First let us settle the case, where $n = 1$. 
Here $\SL_n \rtimes (\bbZ/ 2 \bbZ) \simeq \bbZ/ 2 \bbZ$.
Examining the natural two-dimensional permutation representation $W$ of $\bbZ/ 2 \bbZ \simeq \Sym_2$, 
we readily see that $k(W)^{\bbZ/ 2 \bbZ}$ is
rational over $k$. (Here $k$ is a field of arbitrary characteristic.) Consequently,
\begin{equation} \label{e.n=1} 
\text{$B(\bbZ/ 2 \bbZ)$ is stably rational over $k$.}
\end{equation}

Now suppose $n \geqslant 2$. Set $V = \Mat_{n \times (n-1)} \times \Mat_{(n-1) \times n}$ and consider the representation $V\times W$ of $\SL_n \rtimes (\bbZ/ 2 \bbZ)$, where $\SL_2 \rtimes (\bbZ/ 2 \bbZ)$ acts on $W$ via $\Z/2\Z$. 
Since the $\SL_n$-action on $V$ is generically free (see~\Cref{sln1}(b)) and 
the $\bbZ/2 \bbZ$-action on $W$ is generically free (obvious), we conclude that so is the $\SL_n \rtimes (\mathbb Z/ 2 \bbZ)$-action on $V\times W$. 
It remains to show that
\begin{equation} \label{e.sln}
\text{ $k(V\times W)^{\SL_n \rtimes (\mathbb Z/ 2 \bbZ)}$ is stably rational over $k$.}
\end{equation}
In view of \Cref{sln1}(c), we have
\[ k(V\times W)^{\SL_n \rtimes (\mathbb Z/ 2 \bbZ)} = k\big( (V/\SL_n) \times W \big)^{\bbZ/2\bbZ} = k(\Mat_{(n-1) \times (n-1)} \times W)^{\bbZ/2 \bbZ}. \]

To see how $\bbZ/ 2 \bbZ = \langle \tau \rangle$ acts on $V/\SL_n = \Mat_{(n-1) \times (n-1)}$, recall that
$\tau$ sends $v = (X, Y) \in V$ to $(Y^T, X^T)$. Hence, the induced
action of $\tau$ on $V/\SL_n = \Mat_{(n-1) \times (n-1)}$ takes $\pi(X, Y) = YX$ to
$\pi(Y^T, X^T) = X^T Y^T = \pi(X, Y)^T$. In other words, the induced $\bbZ/2 \bbZ$-action 
on $V/\SL_n = \Mat_{(n-1) \times (n-1)}$ is given by $\tau \colon Z \mapsto Z^T$.
In particular, this action is linear. 

Now \eqref{e.n=1}, tells us that $k(\Mat_{(n-1) \times (n-1)} \times W)^{\bbZ / 2 \bbZ}$ is stably rational over $k$. 
This completes the proof of~\eqref{e.sln} and thus of~\Cref{sln}.
\end{proof}

\begin{rmk}
For $n \geqslant 3$, the $\SL_n \rtimes (\bbZ/ 2 \bbZ)$-action on $V$ is generically free, so we can work directly with $V$, rather than $V \times W$. 
The extra factor of $W$ is only needed when $n = 2$. Note also that if $\Char(k) \neq 2$, then
$\bbZ/ 2 \bbZ$ is isomorphic to $\mu_2$, and thus~\eqref{e.n=1} is a special case of \Cref{ex.mu}.
\end{rmk}

\section{Group extensions}

In the sequel we will apply \Cref{sln} in combination with the following proposition.

\begin{prop}\label{semidirect}
	Let $n$ be an odd integer and let $d\geq 1$. Consider a short exact sequence 
\[	\xymatrix{  1 \ar@{->}[r] & \SL_n \ar@{->}[r] & H \ar@{->}[r]^{\quad\pi\qquad} & \bbZ / 2 \bbZ \ar@{->}[r] & 1} \]
of algebraic groups. Then either $H \simeq \on{SL}_n\times (\bbZ / 2 \bbZ)$ or $H \simeq \on{SL}_n\rtimes (\bbZ / 2 \bbZ)$, where the generator $\tau$ of $\bbZ / 2 \bbZ$ acts by $\tau \colon A  \mapsto (A^{-1})^{T}$, as in \Cref{sect.sln}.
\end{prop}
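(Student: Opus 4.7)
The case $n=1$ is trivial, so assume $n\geq 3$ and odd. Since $\SL_n$ is connected, $\SL_n = H^\circ$ and $\bbZ/2\bbZ$ is the component group of $H$. Conjugation in $H$ descends to a homomorphism
$\bar\rho \colon \bbZ/2\bbZ \to \on{Out}(\SL_n)$, and for $n\geq 3$ one has $\on{Out}(\SL_n) \cong \bbZ/2\bbZ$, generated by the involution $\sigma \colon A \mapsto (A^{-1})^T$. The cases $\bar\rho = 1$ and $\bar\rho = \sigma$ will give the two alternatives in the statement. Because $\SL_n$ is special (\Cref{ex.special}), $H^1(k, \SL_n) = 1$, so the $\SL_n$-torsor $H \setminus \SL_n$ is trivial and contains a $k$-point $h_0$.

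In Case A ($\bar\rho$ trivial) I consider the centralizer $C := Z_H(\SL_n)$. Triviality of $\bar\rho$ forces $\mu_n = Z(\SL_n) \subseteq Z(H) \subseteq C$, while $C \cap \SL_n = \mu_n$. Writing $\on{conj}_{h_0} = \on{conj}_A$ with $A \in \on{PGL}_n(k)$ and lifting $A$ to $\SL_n(\cl{k})$ exhibits a $\cl{k}$-point of $C$ in the non-identity component, so $C \to \bbZ/2\bbZ$ is surjective. Hence $C$ sits in a short exact sequence of commutative finite $k$-group schemes
\begin{equation*}
1 \to \mu_n \to C \to \bbZ/2\bbZ \to 1.
\end{equation*}
A Kummer computation gives $\on{Ext}^1_k(\bbZ/2\bbZ, \mu_n) \cong \mu_n(k)/\mu_n(k)^2$, which vanishes for $n$ odd because squaring is a bijection on $\mu_n$. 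The sequence therefore splits, yielding a $k$-embedding $\bbZ/2\bbZ \hookrightarrow C$ that commutes with $\SL_n$, meets $\SL_n$ trivially, and together with $\SL_n$ generates $H$. Thus $H \cong \SL_n \times \bbZ/2\bbZ$.

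In Case B ($\bar\rho = \sigma$) I aim to modify $h_0$ by some $g \in \SL_n(k)$ so that $h := g h_0$ satisfies $h^2 = 1$ and $\on{conj}_h = \sigma$, which will realise $H \cong \SL_n \rtimes \bbZ/2\bbZ$ with the stated action. Writing $\on{conj}_{h_0} = \on{conj}_A \circ \sigma$ with $A \in \on{PGL}_n(k)$, the identities $\on{conj}_h = \on{conj}_{gA} \circ \sigma$ and $h^2 = g \cdot \on{conj}_{h_0}(g) \cdot h_0^2$ reformulate the two requirements as a system for $g$. Using that $h_0^2 \in \SL_n(k)$ is $\on{conj}_{h_0}$-fixed (since it commutes with $h_0$), this system reduces, after tracking the projective ambiguity in $A$, to an equation in $\mu_n$ solvable over $k$ because squaring on $\mu_n$ is a bijection for $n$ odd. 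The resulting $h$ yields $H \cong \SL_n \rtimes_\sigma \bbZ/2\bbZ$.

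The main obstacle is Case B: the outer involution $\sigma$ interacts nontrivially with the inner twist $A$ and the square $h_0^2$, and the would-be lift of $A$ from $\on{PGL}_n(k)$ to $\SL_n(k)$ may exist only after adjusting by $\mu_n$. That this adjustment can always be made---reflecting the vanishing of both $\on{Ext}^1_k(\bbZ/2\bbZ,\mu_n)$ with the trivial action and $H^2(\bbZ/2\bbZ, \mu_n)$ with the inversion action---is exactly the role of the hypothesis that $n$ is odd.
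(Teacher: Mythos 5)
Your Case A is correct, and it takes a genuinely different route from the paper: rather than normalizing the conjugation action of a chosen lift and then extracting an involution from the finite cyclic group it generates (which is what the paper does), you split the commutative extension $1\to\mu_n\to Z_H(\SL_n)\to\bbZ/2\bbZ\to 1$, which for odd $n$ splits for elementary reasons (your Ext computation, or simply the $n$-th power map, which factors through the quotient and gives a section). Apart from recording that $Z_H(\SL_n)$ is commutative and that it really surjects onto the component group (both easy), this half is fine.

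The gap is in Case B. Your plan needs an $h$ over $\tau$ with $\on{conj}_h|_{\SL_n}=\sigma$ on the nose, i.e.\ a $g\in\SL_n(k)$ whose image in $\on{PGL}_n(k)$ is $A^{-1}$; the obstruction to lifting $A\in\on{PGL}_n(k)$ to $\SL_n(k)$ is the class in $k^\times/(k^\times)^n$ of the determinant of any $\GL_n(k)$-representative, and this is not killed by $n$ being odd -- it is not an equation in $\mu_n(k)$, nor is it controlled by $H^2(\bbZ/2\bbZ,\mu_n)$, which is where your sketch locates the difficulty. Concretely, take $d\in k^\times$ not an $n$-th power, $A=\on{diag}(d,1,\dots,1)$, $\psi=\on{conj}_A\circ\sigma$ (an involution of $\SL_n$ over $k$ since $A$ is symmetric), and $H=\SL_n\rtimes_{\psi}(\bbZ/2\bbZ)$. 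Every $k$-point of $H$ over $\tau$ has the form $(g,\tau)$ with $g\in\SL_n(k)$ and acts on $\SL_n$ by $\on{conj}_{gA}\circ\sigma$; if this were $\sigma$ then $gA$ would be a scalar $\lambda I$, forcing $d=\det A=\lambda^n$, a contradiction. So your system for $g$ has no solution, although the conclusion of the proposition does hold for this $H$: conjugating the identity component by $T=\on{diag}(d^{(n-1)/2},1,\dots,1)\in\GL_n(k)$ replaces $A$ by $TAT^T=d\cdot S$ with $S\in\SL_n(k)$ symmetric (one solves $t^2\det A\in(k^\times)^n$, possible exactly because $\gcd(2,n)=1$), and then $(S^{-1},\tau)$ is an involution acting by $\sigma$. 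The missing idea is precisely this freedom to re-normalize the symmetric matrix attached to the outer involution by a $\GL_n(k)$-congruence and a scalar: an isomorphism $H\simeq\SL_n\rtimes\bbZ/2\bbZ$ cannot in general be taken to restrict to the identity (or even to an $\SL_n(k)$-inner automorphism) on $\SL_n$, so modifying $h_0$ only by elements of $\SL_n(k)$ cannot reach the normal form. (This is also the delicate point in the paper's argument, which asserts that every $k$-automorphism of $\SL_n$ is $\on{conj}_B$ or $\on{conj}_B\circ\sigma$ with $B\in\SL_n(k)$; in the inner case that assertion follows from $x^2\in\SL_n(k)$ together with oddness of $n$, but in the outer case one needs the rescaling above.)
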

	
\begin{proof} Note that the fiber $\pi^{-1}(\tau)$ is an $\SL_n$-torsor.
    Since $\SL_n$ is a special group, this torsor is split. In other words, there exists an $x\in H(k)$ such that $\pi(x)= \tau$. Let $\phi_x:\on{SL}_n\to \on{SL}_n$ denote conjugation by $x$: $\phi_x(A)=xAx^{-1}$. Since $\phi_x$ is a $k$-group automorphism of $\on{SL}_n$, there exists $B\in \on{SL}_n(k)$ such that either $\phi_x(A)=BAB^{-1}$ for every $A\in \on{SL}_n$, or $\phi_x(A)=B(A^{-1})^TB^{-1}$. After replacing $x$ by $Bx$, we may assume that either $\phi_x=\on{Id}$ or $\phi_x(A)=(A^{-1})^T$. It now suffices to show that there exists a $y \in H(k)$ such that $\pi(y)= \tau$ and $y^2=1$. 
    
    In both cases, $\phi_{x^2}=(\phi_x)^2$ equals the identity, i.e., $x^2Ax^{-2}=A$ for every $A\in \on{SL}_n({k})$. It follows $x^2$ lies in the center of $\on{SL}_n$, i.e., $x^2 \in \mu_n(k) \subset \on{SL}_n(k)$ is a  diagonal matrix.
    
    Let $\ang{x}\subset H(k)$ be the subgroup generated by $x$. The restriction of $\pi$ to $\ang{x}$ is surjective and it sends $x^m$ to $\tau^m$. It follows that $\on{Ker}(\pi) \cap \ang{x} =\ang{x^2}$, so that we have a short exact sequence
    \[	\xymatrix{  1 \ar@{->}[r] & \ang{x^2} \ar@{->}[r] & \ang{x} \ar@{->}[r]^{\quad\pi\qquad} & \bbZ / 2 \bbZ \ar@{->}[r] & 1.} \]
    The order of $\mu_n(k)$ divides $n$, hence $\mu_n(k)$ is cyclic of odd order. Since $\ang{x^2}$ is a subgroup of $\mu_n(k)$, it is also cyclic of odd order. On the other hand, since $\ang{x}$ surjects onto $\bbZ/ 2 \bbZ$, $\ang{x}$ is of even order. We conclude that $\ang{x}$ contains an element $y$ of order $2$, and $\pi(y) = \tau$ as desired.
\end{proof}	

\section{The Noether Problem for $G_2$, $\on{Spin}_7$ and $\on{Spin}_{10}$}
\label{sect7-10}

For the remainder of this paper we will assume that $k$ is a field of characteristic $\neq 2$. In particular, 
$\mu_2 \simeq \bbZ / 2 \bbZ$ over $k$.

\begin{prop}\label{prop.g2}
$BG_2$ is stably rational.
\end{prop}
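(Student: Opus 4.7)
The plan is to apply \Cref{stabilizer} to the linear action of $G := G_2 \times \bbG_m$ on $V$, the $7$-dimensional representation of $G_2$ on the imaginary octonions, with $\bbG_m$ acting by scalar multiplication. Let $q$ denote the $G_2$-invariant non-degenerate quadratic form on $V$, and fix a $k$-point $v_0 \in V$ with $q(v_0) \neq 0$.

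The two hypotheses of \Cref{stabilizer} that I would check are density of the orbit and smoothness of the stabilizer. For density, $G_2$ acts transitively on each non-isotropic level set $\{q = c\}$ (the pointwise stabilizer $\SL_3$ is special, so transitivity descends to $k$-points via $H^1(k, \SL_3) = 1$), and since $\bbG_m$ scales $q$ by $t^2$, the orbit $G \cdot v_0$ coincides with the open subvariety $\{q \neq 0\} \subset V$. Smoothness of the stabilizer will follow from generic smoothness of the orbit map in characteristic $\neq 2$. The condition $(g, t) \in H := \Stab_G(v_0)$ reads $t\, g(v_0) = v_0$; applying $q$ forces $t^2 = 1$, so $H$ fits in an exact sequence
\[ 1 \to \SL_3 \to H \to \mu_2 \to 1, \]
whose kernel $\Stab_{G_2}(v_0) = \SL_3$ corresponds to $t=1$, and whose right-hand surjection is witnessed by any $g \in G_2(k)$ satisfying $g(v_0) = -v_0$ (which exists by the same transitivity argument applied to $-v_0 \in V(k)$).

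By \Cref{semidirect} (applied with $n=3$), the group $H$ is isomorphic either to $\SL_3 \times \mu_2$ or to $\SL_3 \rtimes \mu_2$ with the involution $A \mapsto (A^{-1})^T$. In the first case, $BH$ is stably rational by \Cref{birational}(c), since $B\SL_3$ is stably rational (\Cref{ex.special}) and so is $B\mu_2$ (\Cref{ex.mu}); in the second case, $BH$ is stably rational by \Cref{sln}. Either way, \Cref{stabilizer} yields that $B(G_2 \times \bbG_m)$ is stably rational, and a final application of \Cref{birational}(c) (with $B\bbG_m$ stably rational) gives $B(G_2 \times \bbG_m) \sim_{sb} BG_2$, so that $BG_2$ is stably rational.

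The main obstacle I foresee is the bookkeeping of the stabilizer's $k$-structure: I need to verify that $H$ is smooth as a $k$-group scheme in arbitrary characteristic $\neq 2$ and that the kernel of $H \to \mu_2$ is truly the split $\SL_3$ over $k$ (so that \Cref{sln} applies verbatim rather than to some twisted form). Both should follow from working with the split $k$-form of $G_2$ together with the transitivity of $G_2(k)$ on non-isotropic vectors of fixed norm, which in turn comes from the speciality of $\SL_3$.
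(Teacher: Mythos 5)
Your overall route is the paper's: the same $G_2\times\bbG_m$-action on the $7$-dimensional (trace-zero octonion) representation, \Cref{stabilizer} to pass to the stabilizer of a point in the open orbit, and then \Cref{sln} to conclude. Your use of \Cref{semidirect} to avoid pinning down the extension explicitly (handling both $\SL_3\times\mu_2$ and $\SL_3\rtimes\mu_2$) is legitimate and mirrors the paper's own treatment of $\on{Spin}_{11}$, whereas the paper instead cites Jacobson, Singh--Thakur and Pirisi--Talpo for the identification of the generic stabilizer with $\SL_3\rtimes\mu_2$. However, there is a genuine gap in your identification of the kernel. For an arbitrary $k$-point $v_0$ with $q(v_0)\neq 0$, the scheme-theoretic stabilizer of $v_0$ in the split $G_2$ is \emph{not} $\SL_3$ in general: since $v_0$ is trace-zero, $v_0^2=-q(v_0)$, and the stabilizer is the special unitary group of a rank-$3$ hermitian form over the quadratic \'etale algebra $K=k[v_0]\simeq k[x]/(x^2+q(v_0))$; this is the split $\SL_3$ precisely when $K$ is split, i.e.\ when $-q(v_0)$ is a nonzero square. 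Your proposed justification --- transitivity of $G_2(k)$ on the level set $\{q=q(v_0)\}$ deduced from $H^1(k,\SL_3)=1$ --- is circular (it assumes the stabilizer is $\SL_3$, which is exactly what is in question), and in any case cannot help: all points of a fixed level set have isomorphic stabilizers, so rational transitivity within that level set never changes the $k$-form of the stabilizer you obtain. Without a split kernel, neither \Cref{semidirect} nor \Cref{sln} applies (they would have to be redone for an outer form of $\SL_3$, which is not special). The fix is to choose $v_0$ in the right square class, e.g.\ $q(v_0)=-1$, which is possible because the split form is universal; this is precisely the issue behind the paper's footnote on the hypothesis $\sqrt{-1}\in k$ in Pirisi--Talpo. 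Note also that \Cref{stabilizer} only requires the orbit of $v_0$ to be dense, a geometric condition that holds for every non-isotropic $v_0$, so no $k$-rational transitivity statement is needed at all for that part.

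A secondary point: ``smoothness of the stabilizer follows from generic smoothness of the orbit map'' is not a valid argument in positive characteristic, where generic smoothness can fail. What you actually need is the scheme-theoretic identification of the kernel $\Stab_{G_2}(v_0)$ with $\SL_3$ (via the references the paper cites, or a direct argument that the pointwise stabilizer of a split quadratic \'etale subalgebra of the split octonions is $\SL_3$ as a group scheme); once that is in place, $H$ is an extension of a subgroup of $\mu_2$ (smooth, as $\Char(k)\neq 2$) by $\SL_3$, hence smooth, and the rest of your argument goes through.
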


\begin{proof}
    Let $V$ be the octonion representation of $G_2$. If we let $\G_m$ act on $V$ by scaling, the induced $G_2\times \G_m$-action on $V$ has an open orbit, and the stabilizer in general position is isomorphic to $\on{SL}_3\rtimes \mu_2$, where $\mu_2$ acts on $\on{SL}_3$ by $A\mapsto (A^{-1})^T$; see \cite[Theorem 4]{jacobson1958composition}, \cite[Proposition 3.3]{singh2005reality} or \cite[Lemma 3.3]{pirisi2017motivic}.\footnote{In~\cite{pirisi2017motivic} it is assumed that $\sqrt{-1}\in k$. However, this assumption can be dropped if one works with the split form of $G_2$ throughout.} By \Cref{stabilizer}, $BG_2$ is stably birationally equivalent 
    to $B(\on{SL}_3\rtimes \mu_2)$. By \Cref{sln}, $B(\on{SL}_3\rtimes \mu_2)$ is stably rational, and hence so is $BG_2$.
\end{proof}

 For an alternative proof of Proposition~\ref{prop.g2}, see \cite[Corollary 2, p.~568]{zubkov-shestakov}.

\begin{prop}\label{prop.7}
$B\on{Spin}_{7}$ is stably rational.
\end{prop}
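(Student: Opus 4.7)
The plan is to imitate the proof of \Cref{prop.g2}, with the $8$-dimensional spin representation $V$ of $\on{Spin}_7$ playing the role of the octonion representation. Letting $\G_m$ act on $V$ by scalar multiplication, I would first invoke the classical fact that the resulting $\on{Spin}_7 \times \G_m$-action on $V$ admits a dense open orbit whose (scheme-theoretic) stabilizer at a general $k$-rational point is isomorphic to $G_2$. This stabilizer is smooth, so \Cref{stabilizer} applies and yields that $B(\on{Spin}_7 \times \G_m)$ is stably birationally equivalent to $BG_2$.

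From there the argument is formal. By \Cref{prop.g2}, $BG_2$ is stably rational, so $B(\on{Spin}_7 \times \G_m)$ is as well. Since $\G_m = \GL_1$ is special and rational, \Cref{lem.special} shows that $B\G_m$ is stably rational. Applying \Cref{birational}(c) with $G = \G_m$ and $G_1 = \on{Spin}_7$, we see that $B(\on{Spin}_7 \times \G_m)$ is stably birationally equivalent to $B\on{Spin}_7$, and hence $B\on{Spin}_7$ itself is stably rational.

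The only non-formal input, and the main potential obstacle, is the identification of the generic stabilizer with $G_2$ together with the existence of a $k$-rational vector at which this stabilizer is realized. This is a classical piece of the structure theory of $\on{Spin}_7$: $G_2$ is the stabilizer in $\on{Spin}_7$ of a non-isotropic vector in the spin representation (equivalently, via triality, the stabilizer in $\on{Spin}_8$ of a vector in one of its half-spin representations), and in characteristic $\neq 2$ this stabilizer is smooth. The $\G_m$-factor is needed because $\on{Spin}_7$ alone acts on $V$ with codimension-one orbits, namely the level sets of the invariant quadratic form, so scaling is required to obtain a single open orbit. Once this classical input is in hand, no further difficulty arises.
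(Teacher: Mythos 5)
Your overall strategy---working directly with the $\on{Spin}_7\times\G_m$-action on the spin representation $V$ and then descending to $B\on{Spin}_7$ via \Cref{birational}(c)---is viable, and is in fact a bit more economical than the paper's route, which first passes to the even Clifford group $\Gamma_7^+=(\on{Spin}_7\times\G_m)/C$ and invokes Merkurjev's lemma on extensions by $\G_m$. However, your identification of the generic stabilizer is incorrect, and this is precisely the point the paper's detour is designed to handle. The classical fact is that the stabilizer \emph{in $\on{Spin}_7$} of a point $p$ with $g(p)\neq 0$ (where $g$ is the invariant quadratic form on $V$, whose nonzero level sets are the $\on{Spin}_7$-orbits) is $G_2$; but the stabilizer $H$ of $p$ in $\on{Spin}_7\times\G_m$ is strictly larger. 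Indeed, the central element $\gamma$ of $\on{Spin}_7$ acts on $V$ as $-\on{Id}$, so $(\gamma,-1)$ acts trivially on all of $V$ and hence lies in $H$, while its $\G_m$-component $-1\neq 1$ shows it is not in $G_2\times\{1\}$. The computation $g(p)=g(\lambda hp)=\lambda^2g(p)$ forces $\lambda^2=1$, and one gets an extension $1\to G_2\to H\to\mu_2\to 1$ split by the central subgroup $C=\ang{(\gamma,-1)}$, so that $H\simeq G_2\times\mu_2$, not $G_2$. Consequently \Cref{stabilizer} gives that $B(\on{Spin}_7\times\G_m)$ is stably birational to $B(G_2\times\mu_2)$, not to $BG_2$ as you claim.

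The gap is easily repaired: since $BG_2$ is stably rational by \Cref{prop.g2}, \Cref{birational}(c) shows that $B(G_2\times\mu_2)$ is stably birational to $B\mu_2$, which is stably rational by \Cref{ex.mu}; the remainder of your argument (stable rationality of $B(\on{Spin}_7\times\G_m)$, then \Cref{birational}(c) with $G=\G_m$, $G_1=\on{Spin}_7$) then goes through unchanged. The paper instead removes the kernel $C$ at the outset, replacing $\on{Spin}_7\times\G_m$ by $\Gamma_7^+$, so that the stabilizer of $p$ really is $G_2$ on the nose. One further point: smoothness of the stabilizer is not automatic in characteristic $p>2$, since Igusa identifies it with $G_2$ only at the level of points; the paper supplements this with the Lie algebra computation of Sato--Kimura to verify that the scheme-theoretic stabilizer is smooth, which is a hypothesis of \Cref{stabilizer} and should be addressed rather than asserted as classical.
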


\begin{proof}  
    Let $V$ be the spin representation of $\on{Spin}_{7}$. Letting $\G_m$ act on $V$ by scalar multiplication, we obtain an action of $\on{Spin}_{7}\times \G_m$ on $V$. If $\gamma$ is the generator of the center of $\on{Spin}_{7}$, then $\gamma$ acts as $-\on{Id}$ on $V$. It follows that the subgroup \[ C:=\ang{(\gamma,-1)}\simeq \mu_2 \] of $\on{Spin}_{7}\times \G_m$ acts trivially on $V$. The quotient $(\on{Spin}_{7}\times \G_m)/C$ acts faithfully on $V$. This quotient group is usually called
    ``the even Clifford group" and is denoted by $\Gamma_{7}^+$. 
    The natural short exact sequence
    \[	\xymatrix{  1 \ar@{->}[r] & \on{Spin}_7 \ar@{->}[r] & \Gamma_7^+ \ar@{->}[r] & \G_m \ar@{->}[r] & 1} \]
	shows that $B\on{Spin}_{7}$ and $B\Gamma_{7}^+$ are stably birational; see \cite[Lemma 4.3]{merkurjev2019rationality} or~\cite[Corollary 4.4]{merkurjev2019rationality}.
		
		It remains to show that $B\Gamma_{7}^+$ is stably rational.
     By \cite[Proposition 4]{igusa1970classification}, there exists a quadratic form $g:V\to \A^1$ such that the orbits of the $\on{Spin}_7$-action on $V$ are exactly the fibers of $g$. In particular, the $\Gamma_7^+$-action on $V$ has an open orbit. Furthermore, if $p\in V(k)$, $g(p) \neq 0$, and $S$ is the stabilizer of $p$ in $\on{Spin}_7$,  then
\begin{equation} \label{e.7} 
\text{$S \simeq G_2$ as group schemes;} 
\end{equation}
     see also~\cite[Table 1]{garibaldi2017spinors}. Note that Igusa only showed that $S \simeq G_2$ at the level of points.  This settles~\eqref{e.7} in characteristic $0$. To complete the proof of~\eqref{e.7} in finite characteristic ($\neq 2$) it remains to show 
     that $S$ is smooth, or equivalently, that $\on{dim}\on{Lie}(S)=\dim S$, where $\on{Lie}(S)$ denotes the Lie algebra of $S$. This is done in~\cite[pp. 115-116]{sato1977classification}, where it is shown that~$\on{Lie}(S)\simeq\mathfrak{g}_2$;
     see~\Cref{rem.char2}.~\footnote{This argument is implicit in~\cite{garibaldi2017spinors};
     we are grateful to S.~Garibaldi for clarifying it to us.}
     
     Denote by $H$ the stabilizer of $p$ in $\on{Spin}_7\times\G_m$.
    If $(h,t)\in {H}(\cl{k})$, then \[g(p)=g((h,\lambda)p)=g(\lambda hp)=\lambda^2g(hp)=\lambda^2g(p).\] Since $g(p)\neq 0$, we deduce that $\lambda^2=1$. Thus the projection $\on{Spin}_7 \times \bbG_m \to \bbG_m$
    to the second coordinate gives rise to a short exact sequence 
    \[	\xymatrix{  1 \ar@{->}[r] & S \ar@{->}[r] & H \ar@{->}[r]^{\pi} & \mu_2 \ar@{->}[r] & 1.} \]
     Since $S \simeq G_2$ has trivial center, $C$ intersects $S$ trivially. The inclusion $S \hookrightarrow {H}$ now
     induces an isomorphism between $S$ and $H/C$, which is the stabilizer of $p$ in $\Gamma_7^+$. 
     By \Cref{stabilizer}, $B\Gamma_7^+$ is stably birational to $B(H/C) = BS = BG_2$. By \Cref{prop.g2}, $BG_2$ is stably rational; hence so is $B\Gamma_7^+$.
\end{proof}

\begin{rmk} \label{rem.char2} 
The base field in~\cite{sato1977classification} is assumed to be $\C$. However, the Lie algebra calculation of \cite[pp. 115-116]{sato1977classification} remains valid over any field $k$ of characteristic $\neq 2$. The same is true for the Lie algebra calculations from \cite{sato1977classification} which will be used in the proofs 
of Propositions~\ref{prop.10} and~\ref{prop.11}.
\end{rmk}

% \begin{rmk}
% The generic stabilizer group schemes of the (half) spin representations of $\on{Spin}_n$ for $n\leq 14$ appear 
% in \cite[Table 1]{garibaldi2017spinors}. 
% \end{rmk}

\begin{prop}\label{prop.10}
$B\on{Spin}_{10}$ is stably rational.
\end{prop}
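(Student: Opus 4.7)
The plan is to apply \Cref{stabilizer} directly to the action of $\on{Spin}_{10}$ on its $16$-dimensional half-spin representation $V$. The key simplification relative to the proof of \Cref{prop.7} is that $V$ is already prehomogeneous under $\on{Spin}_{10}$ itself (there is no nonconstant polynomial invariant), so I do not need to enlarge by $\bbG_m$ and pass to the even Clifford group.

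The geometric input I will use is Igusa's classification of orbits in the half-spin representation \cite{igusa1970classification} (see also \cite[\S 7]{sato1977classification} and \cite[Table 1]{garibaldi2017spinors}): the $\on{Spin}_{10}$-action on $V$ has exactly three orbits---the origin, the cone of pure spinors, and an open orbit---and the scheme-theoretic stabilizer of a point $v_0$ in the open orbit is $S \simeq \on{Spin}_7 \ltimes W$, where $W$ is the $8$-dimensional spin representation of $\on{Spin}_7$. Over $\bbC$ this identification is classical; to extend it to an arbitrary field $k$ of characteristic $\neq 2$, one checks, as in the proof of \Cref{prop.7}, that $\dim \on{Lie}(S) = \dim \on{Spin}_7 + 8 = 29$ via the Lie-algebra calculations in \cite{sato1977classification} (see \Cref{rem.char2}), which implies that $S$ is smooth. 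A $k$-rational point $v_0$ in the open orbit exists whenever $k$ is infinite, because the orbit is a dense open subvariety of the affine space $V$; over a finite field it exists by Lang's theorem applied to the homogeneous space $\on{Spin}_{10}/S$.

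Granted this, the remainder of the proof is a short chain of reductions. \Cref{stabilizer} gives that $B\on{Spin}_{10}$ is stably birationally equivalent to $BS = B(\on{Spin}_7 \ltimes W)$. Since $W$ is a normal vector subgroup of $S$, \Cref{affine} then yields that $B(\on{Spin}_7 \ltimes W)$ is stably birational to $B\on{Spin}_7$. Finally, \Cref{prop.7} tells us that $B\on{Spin}_7$ is stably rational, completing the argument. The main obstacle lies in the second paragraph---verifying the structure and smoothness of $S$ in positive characteristic---which reduces to the Lie-algebra dimension count cited above; once that is in hand, the rest of the proof is formal.
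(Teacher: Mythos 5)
Your proposal is correct and follows essentially the same route as the paper: apply \Cref{stabilizer} to the half-spin representation of $\on{Spin}_{10}$, identify the stabilizer of a point in the open orbit as $W\rtimes\on{Spin}_7$ (smooth by the Sato--Kimura Lie-algebra computation, cf.\ \Cref{rem.char2}), then reduce to $B\on{Spin}_7$ via \Cref{affine} and conclude by \Cref{prop.7}. Your extra remarks on the existence of a $k$-rational point in the open orbit (infinite fields by density, finite fields via Lang) only make explicit a point the paper leaves implicit.
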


\begin{proof}
    Let $V$ be the half-spin representation of $\on{Spin}_{10}$. 
    By \cite[Proposition 2]{igusa1970classification} there are two non-zero orbits in $V$. Let $H$ be the stabilizer of a $k$-point in the open orbit. The subgroup $H$ is explicitly described in \cite[Lemma 3]{igusa1970classification} (with $n=5$): we have $H=W\rtimes G_0$, where $W$ has the structure of an $8$-dimensional vector space and $G_0$ acts linearly on $W$. By~\cite[Proposition 1]{igusa1970classification}, $G_0\simeq\on{Spin}_7$.~\footnote{Here we could have cited~\cite[Proposition 2]{igusa1970classification} which asserts that $H=(\on{Spin}_7)\cdot (\G_a)^8$. In \cite{igusa1970classification}, 
    the symbol $H=H_1\cdot H_2$ is used for the semidirect product $H=H_2\rtimes H_1$; see e.g.~\cite[Lemma 1]{igusa1970classification}. To avoid confusion, we spelled out the argument in detail.} Note that $H$ is smooth by the Lie algebra computation in~\cite[p.~121]{sato1977classification}; once again, see~\Cref{rem.char2}. 
    Hence $H\simeq W\rtimes \on{Spin}_7$ as group schemes (and not just at the level of points).
    
    By~\Cref{stabilizer}, $B\on{Spin}_{10}$ is stably birational to $BH=B(W\rtimes\on{Spin}_7)$. By \Cref{affine}, $B(W\rtimes \on{Spin}_7)$ is stably birational to $B\on{Spin}_7$, which is stably rational by \Cref{prop.7}. We conclude that $B\on{Spin}_{10}$ is stably rational.
\end{proof}
	
\section{The Noether Problem for $\on{Spin}_{11}$}
\label{sect11}

\begin{prop}\label{prop.11}
$B\on{Spin}_{11}$ is stably rational.
\end{prop}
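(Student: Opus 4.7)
The plan is to mimic the structure of the proofs of Propositions~\ref{prop.7} and~\ref{prop.10}. Let $V$ be the $32$-dimensional spin representation of $\on{Spin}_{11}$. I would first reduce to the even Clifford group $\Gamma^+_{11} := (\on{Spin}_{11} \times \G_m)/C$, where $C := \langle (\gamma, -1)\rangle \simeq \mu_2$ and $\gamma$ is the generator of the center of $\on{Spin}_{11}$ (which acts on $V$ as $-\on{Id}$). The resulting short exact sequence
\[
1 \to \on{Spin}_{11} \to \Gamma^+_{11} \to \G_m \to 1
\]
implies, just as in the proof of \Cref{prop.7}, that $B\on{Spin}_{11}$ is stably birational to $B\Gamma^+_{11}$. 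It therefore suffices to show that $B\Gamma^+_{11}$ is stably rational.

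Next, I would invoke the theory of prehomogeneous vector spaces of Sato and Kimura~\cite{sato1977classification}: $(\on{Spin}_{11} \times \G_m, V)$ appears on their list, the ring of $\on{Spin}_{11}$-invariants on $V$ is generated by a single relative invariant $f$ of degree $4$, and for any $p \in V(k)$ with $f(p) \neq 0$ the scheme-theoretic stabilizer $S$ of $p$ in $\on{Spin}_{11}$ is isomorphic to $\SL_5$. Smoothness of $S$ (needed in positive characteristic) follows from the Lie algebra computation in~\cite{sato1977classification}; see~\Cref{rem.char2}. This pins down the generic stabilizer for us.

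Let $H \subset \on{Spin}_{11} \times \G_m$ denote the scheme-theoretic stabilizer of $p$. For any $(h,\lambda) \in H(\overline{k})$, the identity $f(p) = f(\lambda h p) = \lambda^4 f(p)$ forces $\lambda \in \mu_4$. Since $(\gamma,-1)$ stabilizes $p$ we have $C \subset H$, and since $S \subset \on{Spin}_{11} \times \{1\}$ we have $C \cap S = \{1\}$. Thus the stabilizer $H/C$ of $p$ in $\Gamma^+_{11}$ has identity component $S \simeq \SL_5$ and fits into an extension
\[
1 \to \SL_5 \to H/C \to Q \to 1
\]
where $Q$ is a subgroup of $\mu_4/\mu_2 \simeq \mu_2$. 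If $Q$ is trivial, then $H/C \simeq \SL_5$ and $B(H/C)$ is stably rational by \Cref{lem.special}, since $\SL_5$ is special and rational. If $Q \simeq \mu_2$, then \Cref{semidirect} (applicable because $n=5$ is odd) identifies $H/C$ as either $\SL_5 \times \mu_2$ or $\SL_5 \rtimes \mu_2$; in the former case $B(H/C)$ is stably birational to $B\mu_2$ by \Cref{birational}(c), which is stably rational by~\eqref{e.n=1}, and in the latter case $B(H/C)$ is stably rational by \Cref{sln}. In all cases $B(H/C)$ is stably rational, and \Cref{stabilizer} then gives that $B\Gamma^+_{11}$ is stably birational to $B(H/C)$. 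This completes the reduction.

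The hard part will be verifying the two inputs borrowed from~\cite{sato1977classification}: that the generic $\on{Spin}_{11}$-stabilizer on $V$ is isomorphic (as a smooth group scheme over $k$) to $\SL_5$, and that the relative invariant has degree exactly $4$. Both facts are classical in characteristic $0$; transferring them to arbitrary $\on{char}(k) \neq 2$ requires, as in the proofs of Propositions~\ref{prop.7} and~\ref{prop.10}, the explicit Lie algebra computation of Sato-Kimura together with the observation in~\Cref{rem.char2}. Once these inputs are granted, the rest of the argument is an assembly of tools (\Cref{stabilizer}, \Cref{semidirect}, \Cref{sln}, \Cref{lem.special}, \Cref{birational}) already developed in the paper.
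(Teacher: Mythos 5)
Your argument has, in structure, exactly the shape of the paper's proof of \Cref{prop.11}: reduce to $B\Gamma_{11}^+$ via the central extension by $\G_m$ and \cite[Lemma 4.3]{merkurjev2019rationality}, apply \Cref{stabilizer} to a $k$-point of the open $(\on{Spin}_{11}\times\G_m)$-orbit, use the degree-$4$ invariant to force the $\G_m$-component of the stabilizer into $\mu_4$, and finish with \Cref{semidirect}, \Cref{sln}, \Cref{lem.special}, \Cref{birational}(c) and \Cref{ex.mu}. So this is essentially the same proof, not a different route.

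The one place where your write-up overstates the needed input is the claim that \emph{every} $p\in V(k)$ with $f(p)\neq 0$ has scheme-theoretic $\on{Spin}_{11}$-stabilizer isomorphic to $\SL_5$. Over $\overline{k}$ this is fine, but the proposition is over an arbitrary field of characteristic $\neq 2$, and for a general such $k$-point the stabilizer is only a $k$-form of $\SL_5$ (depending on the orbit and the point); Sato--Kimura work over $\C$ and cannot give more. The split form is genuinely needed downstream: \Cref{semidirect} and \Cref{sln} are statements about the split $\SL_n$, and a non-split form (e.g.\ a special unitary group) is not special, so the $Q=1$ case via \Cref{lem.special} would also break. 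This is exactly the gap that Igusa's result, quoted as item (b) in the paper's proof, fills: when $-\lambda$ is a non-zero square, the orbit $J^{-1}(\lambda)$ contains a rational point whose stabilizer is $k$-isomorphic to the split $\SL_5$, and this is an isomorphism of group schemes by the Lie algebra computation in \cite{sato1977classification} (see \Cref{rem.char2}) or by \cite[Table 1]{garibaldi2017spinors}. Since \Cref{stabilizer} only requires one well-chosen $k$-point with dense orbit, running your argument with that particular point repairs the statement, after which your proof coincides with the paper's.
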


Our proof will follow the same pattern as the proof of stable rationality of $B\on{Spin}_7$ in \Cref{prop.7}, except that 
the second half of the argument will be more involved. 
		
	\begin{proof}[Proof of \Cref{prop.11}] Let $V$ be the spin representation of $\on{Spin}_{11}$. 
	Our starting point is the following result of J.~Igusa~\cite[Proposition 6]{igusa1970classification}.

\smallskip
		(a) There exists a non-zero $\on{Spin}_{11}$-invariant homogeneous form $J:V\to \A^1$ of degree $4$, such that the $\on{Spin}_{11}$-orbits in $V\setminus\set{0}$ are the $J^{-1}(\lambda)$, $\lambda\in\A^1\setminus\set{0}$, together with four orbits inside $J^{-1}(0)$.
		
\smallskip
        (b) If $-\lambda\in \A^1(k)$ is a non-zero square, the orbit $J^{-1}(\lambda)$ contains a rational point whose stabilizer is $k$-isomorphic to $\on{SL}_5$. This is an isomorphism of group schemes; see the proof of~\cite[Proposition 39]{sato1977classification} or \cite[Table 1]{garibaldi2017spinors}.

	\smallskip
		Letting $\G_m$ act on $V$ by scalar multiplication, we obtain an action of $\on{Spin}_{11}\times \G_m$ on $V$. If $\gamma$ is the generator of the center of $\on{Spin}_{11}$, then $\gamma$ acts as $-\on{Id}$ on $V$. It follows that the subgroup $C:=\ang{(\gamma,-1)}\simeq \mu_2$ of $\on{Spin}_{11}\times \G_m$ acts trivially on $V$. The quotient
		$(\on{Spin}_{11}\times \G_m)/C$ acts faithfully on $V$. This quotient is usually called the even Clifford group and is denoted by
		$\Gamma_{11}^+$. Applying \cite[Lemma 4.3]{merkurjev2019rationality} to the short exact sequence
		 \[	\xymatrix{  1 \ar@{->}[r] & \on{Spin}_{11} \ar@{->}[r] & \Gamma_{11}^+ \ar@{->}[r]^{\rm pr_2} & \G_m \ar@{->}[r] & 1} \]
		we see that $B\on{Spin}_{11}$ and $B\Gamma_{11}^+$ are stably birational; cf.~\cite[Corollary 4.4]{merkurjev2019rationality}.
		It remains to show that $B\Gamma_{11}^+$ is stably birational.
	   
		By (a), $\on{Spin}_{11}\times \G_m$ acts transitively on $U:=V\setminus J^{-1}(0)$. Let $p\in U(k)$, $H$ be the stabilizer of $p$ in $\on{Spin}_{11}\times \G_m$, and $\overline{H}$ be the stabilizer of $p$ in $\Gamma_{11}^+ = (\on{Spin}_{11} \times \G_m)/C$. Since $C$ acts trivially on $V$, 
		$C$ is a central subgroup of $H$ and $\overline{H} = H/C$. Since $p$ belongs to the open orbit of the $\Gamma_{11}^+$-action, \Cref{stabilizer} tells us that
		$B\Gamma_{11}^+$ is stably birationally equivalent to $B\overline{H}$. Thus it suffices to show that $B \overline{H}$ is stably rational.
		
		If $(h,\lambda)\in H(\cl{k})$, then 
		\[J(p)=g((h,\lambda)p)=J(\lambda hp)=\lambda^4J(hp)=\lambda^4J(p).\] 
		Since $J(p)\neq 0$, we deduce that $\lambda^4=1$. This shows that $H \c \on{Spin}_{11}\times \mu_4$. 
		The kernel of the projection 
		$\pi: H\to \mu_4$ is the stabilizer of $p$ in $\on{Spin}_{11}\times\set{1}$. By (b),
		this stabilizer is isomorphic to $\on{SL}_5$. We thus obtain a short exact sequence
		\[	\xymatrix{  1 \ar@{->}[r] & \SL_5 \ar@{->}[r] & \overline{H} \ar@{->}[r]^{\pi} & \mu \ar@{->}[r] & 1,} \]
		where $\mu:=\on{Im}(\pi)$.
		 Note that $C \cap \on{Spin}_{11} = 1$ and thus $C \cap \on{SL}_5 = 1$. Modding out by $C$, we obtain a short exact sequence 
		\[	\xymatrix{  1 \ar@{->}[r] & \SL_5 \ar@{->}[r] & \overline{H} \ar@{->}[r] & \mu/\pi(C) \ar@{->}[r] & 1.} \]
		Since $C \cap \on{SL}_5 = 1$, we have $\pi(C)\simeq \mu_2$. If $\mu=\pi(C)$, then $\overline{H} \simeq \on{SL}_5$ is special. 
		In this case $B\overline{H}$ is stably rational by~\Cref{lem.special}. Hence we may assume 
		that $\mu = \mu_4$. In this case $\mu/\pi(C) \simeq \mu_2$ and our exact sequence reduces to
		\[	\xymatrix{  1 \ar@{->}[r] & \SL_5 \ar@{->}[r] & \overline{H} \ar@{->}[r] & \mu_2 \ar@{->}[r] & 1.} \]
		By \Cref{semidirect} 
		 either (i) $\overline{H}=\on{SL}_5\times \mu_2$ or (ii) $\overline{H}=\on{SL}_5\rtimes \mu_2$, where $\mu_2$ acts on $\on{SL}_5$ by $A\mapsto (A^{-1})^T$.
		 
		In case (i), $B\overline{H}$ is stably birational to $B \mu_2$ by \Cref{birational}(c), and $B\mu_2$ is stably rational by Example~\ref{ex.mu}. Thus $B\overline{H}$ is stably rational. In case (ii), $B\overline{H}$ is stably rational by~\Cref{sln}.
	\end{proof}

\section{The Noether Problem for $\on{Spin}_{14}$}
\label{sect14}

\begin{prop}\label{prop.14}
$B\on{Spin}_{14}$ is stably rational.
\end{prop}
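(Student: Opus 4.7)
The plan is to mirror the strategy of \Cref{prop.7}, \Cref{prop.10}, and \Cref{prop.11}. I take $V$ to be one of the two half-spin representations of $\on{Spin}_{14}$, which has dimension $64$. By the Sato--Kimura classification~\cite{sato1977classification}, the representation of $\on{Spin}_{14}\times \G_m$ on $V$ is prehomogeneous: there is a non-zero $\on{Spin}_{14}$-invariant quartic form $J \colon V \to \A^1$, and the non-zero fibres $J^{-1}(\lambda)$ are single $\on{Spin}_{14}$-orbits. For a $k$-rational point $p \in V$ with $J(p) \neq 0$, the scheme-theoretic stabilizer $S$ of $p$ in $\on{Spin}_{14}$ is isomorphic to $G_2 \times G_2$; this is classical at the level of $\overline{k}$-points (cf.~\cite[Table 1]{garibaldi2017spinors}), and smoothness of $S$ in any characteristic $\neq 2$ follows from the Lie-algebra computation in \cite{sato1977classification}, valid in this setting by \Cref{rem.char2}.

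Next I would construct the even Clifford group analogue $\Gamma_{14}^+$. The centre of $\on{Spin}_{14}$ is $\mu_4$, acting on $V$ by scalar multiplication via its tautological inclusion $\mu_4 \hookrightarrow \G_m$. Hence the kernel of the action of $\on{Spin}_{14}\times \G_m$ on $V$ is the antidiagonal copy $K = \{(\zeta,\zeta^{-1}) : \zeta \in \mu_4\} \subset \on{Spin}_{14}\times \G_m$, which is isomorphic to $\mu_4$. Setting $\Gamma_{14}^+ := (\on{Spin}_{14}\times \G_m)/K$, one obtains a faithful action of $\Gamma_{14}^+$ on $V$ together with a short exact sequence
\[ \xymatrix{ 1 \ar@{->}[r] & \on{Spin}_{14} \ar@{->}[r] & \Gamma_{14}^+ \ar@{->}[r] & \G_m \ar@{->}[r] & 1,} \]
so $B\on{Spin}_{14}$ and $B\Gamma_{14}^+$ are stably birational by \cite[Lemma 4.3]{merkurjev2019rationality}, exactly as in the proofs of \Cref{prop.7} and \Cref{prop.11}.

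The final step is to identify the stabilizer of $p$ in $\Gamma_{14}^+$. A pair $(g,\lambda)$ stabilizes $p$ precisely when $g \cdot p = \lambda^{-1}p$, which forces $\lambda^4 = 1$ because $J(p) \neq 0$ and $J$ is homogeneous of degree $4$. Thus $\Stab_{\on{Spin}_{14}\times\G_m}(p)$ fits in a short exact sequence $1 \to S \to \Stab(p) \to \mu_4 \to 1$, and the inclusion $K \hookrightarrow \Stab(p)$ (which holds since $K$ acts trivially on $V$) splits this sequence. Since $K$ lies in the centre of $\on{Spin}_{14}\times \G_m$, it commutes with $S$, so $\Stab(p) \simeq (G_2\times G_2) \times K$ as a direct product of group schemes. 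Quotienting by $K$, the stabilizer of $p$ in $\Gamma_{14}^+$ is isomorphic to $G_2 \times G_2$. By \Cref{stabilizer}, $B\Gamma_{14}^+$ is stably birational to $B(G_2\times G_2)$; by \Cref{birational}(c) and \Cref{prop.g2}, $B(G_2\times G_2)$ is stably birational to $BG_2$, which is stably rational. Chaining these stable birational equivalences gives the proposition.

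The main obstacle is the scheme-theoretic identification $S \simeq G_2 \times G_2$ over an arbitrary field of characteristic $\neq 2$: the Sato--Kimura orbit analysis is originally carried out over $\C$, so one needs the Lie-algebra computation of \cite{sato1977classification} to certify smoothness (i.e.\ $\dim\on{Lie}(S) = \dim S = 28$) and thereby upgrade the set-theoretic identification to a group-scheme isomorphism, exactly in the style of \Cref{prop.7}, \Cref{prop.10}, and \Cref{prop.11}. A secondary, routine point is verifying that $K$ centralises $S$, which is automatic because $K$ projects into the centre of $\on{Spin}_{14}$.
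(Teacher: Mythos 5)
Your overall plan (pass to $\on{Spin}_{14}\times\G_m$, compute the stabilizer of a generic vector, apply \Cref{stabilizer}) is the right one, but the key numerical input is wrong and it breaks your stabilizer computation. The half-spin representation $V$ of $\on{Spin}_{14}$ has \emph{no} quartic invariant: by the Sato--Kimura classification \cite{sato1977classification} the ring of $\on{Spin}_{14}$-invariants on the $64$-dimensional module is generated by a single form of degree $8$ (the quartic occurs for $\on{Spin}_{11}$ and $\on{Spin}_{12}$, as in Igusa \cite{igusa1970classification}). So a stabilizing pair $(g,\lambda)$ only satisfies $\lambda^{8}=1$, and the image of $\Stab_{\on{Spin}_{14}\times\G_m}(p)\to\G_m$ is a priori only contained in $\mu_8$, whereas your kernel $K$ accounts only for $\mu_4$. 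In fact the image is strictly larger than $\mu_4$ over $\overline{k}$: by \cite[Example 21.1]{garibaldi2009cohomological} (the input the paper uses) the normalizer of $G_2\times G_2$ in $\on{Spin}_{14}$ is $(G_2\times G_2)\rtimes\mu_8$, with a generator of $\mu_8$ swapping the two $G_2$-factors and rescaling $p$ by a primitive $8$th root of unity; note that your own assertion that the nonzero level sets of the fundamental invariant are single $\on{Spin}_{14}$-orbits forces exactly this. Consequently the sequence $1\to S\to\Stab(p)\to\mu\to 1$ is \emph{not} split by $K$, $\Stab(p)$ is not $(G_2\times G_2)\times K$, and the stabilizer of $p$ in your $\Gamma_{14}^+$ is not $G_2\times G_2$ but an extension of $\mu_2$ by $G_2\times G_2$ in which $\mu_2$ exchanges the factors, i.e.\ the wreath product $G_2\wr\mu_2$. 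Your final step, \Cref{birational}(c) applied to $B(G_2\times G_2)$, therefore does not apply as written.

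The conclusion is salvageable, and the repair is essentially the paper's proof: since $BG_2$ is stably rational (\Cref{prop.g2}), \Cref{birational}(b) gives that $B(G_2\wr\mu_2)$ (or more generally $B(G_2\wr\mu_m)$) is stably birational to $B\mu_2$ (resp.\ $B\mu_m$), which is stably rational by \Cref{ex.mu}; so the chain of stable birational equivalences still closes. The paper moreover arranges things so as to avoid every point you have to sweat: it never identifies the scheme-theoretic stabilizer $S$ of $v$ or the component group exactly, using only the sandwich $G_2\times G_2\subseteq S\subseteq N\simeq (G_2\times G_2)\rtimes\mu_8$; it shows the stabilizer $H\subset\on{Spin}_{14}\times\G_m$ of $v$ projects isomorphically onto a subgroup of $N$ containing $G_2\times G_2$, hence $H\simeq G_2\wr\mu_m$ for some $m\mid 8$, which also yields smoothness of $H$ for free in characteristic $\neq 2$; and it bypasses the Clifford group and \cite[Lemma 4.3]{merkurjev2019rationality} altogether, comparing $B\on{Spin}_{14}$ with $B(\on{Spin}_{14}\times\G_m)$ directly via \Cref{birational}(c). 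If you keep your $\Gamma_{14}^+$ route, you must at minimum replace the quartic by the octic invariant, drop the direct-product claim, and treat the possible factor-swapping $\mu_2$ via \Cref{birational}(b).
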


\begin{proof} 
	Let $V$ be the half-spin representation of $\on{Spin}_{14}$, $v\in V$ be a $k$-point in general position, $S$ be the stabilizer of $v$, and $N$ be the normalizer of $S$. By~\cite[Example 21.1]{garibaldi2009cohomological}, 
	\[ N\simeq (G_2\times G_2)\rtimes \mu_8, \; \quad G_2\times G_2\subseteq S \subseteq N, \] and 
	the $\on{Spin}_{14}$-orbit of $[v]$ is open in $\P(V)$; cf.~also~\cite[\S 8]{garibaldi2017spinors}.
	The $\mu_8$-action on $G_2\times G_2$ factors through the surjection $\mu_8\to \mu_2$, where $\mu_2\simeq S_2$ acts on $G_2\times G_2$ by switching the two factors. Note that this action is well defined even if $k$ does not contain an $8$th root of unity. We will write $N \simeq G_2 \wr \mu_8$, as in~\Cref{prelim}.
	
	Letting $\G_m$ act on $V$ by scalar multiplication, we obtain an action of $\on{Spin}_{14}\times \G_m$ on $V$. The orbit of $v$
	under this action is open and dense in $V$. Let $H$ be the stabilizer of $v$ in $\on{Spin}_{14}\times \G_m$. Consider the
	composition \[\phi:H\hookrightarrow \on{Spin}_{14} \times \G_m\xrightarrow{\on{pr}_1} \on{Spin}_{14}. \] 
	Note that $\phi$ is injective. Indeed, its kernel, the stabilizer of $v$ in $\G_m$, is trivial. Thus $H \simeq \on{Im}(\phi)$. Moreover, clearly $S \times \{ 1 \} \subset H$, and thus 
	\[ G_2 \times G_2 \subset S \subseteq\on{Im}(\phi) \simeq H. \] 
	
	We claim that $\on{Im}(\phi) \subseteq N$. Here the inclusion should 
	be understood scheme-theoretically. To prove this claim, let $R$ be a $k$-algebra and $g\in \on{Spin}_{14}(R)$ be in the image of $\phi$. Then $gv=\lambda v$ for some $\lambda\in R^{\times}$. For any $h\in S(R)$, we have \[g^{-1}hg v=\lambda g^{-1}h v=\lambda g^{-1} v=\lambda \lambda^{-1}v=v. \] This shows that $g^{-1} h g \in S(R)$ for any $h \in S(R)$. In other words, $g \in N(R)$, as claimed.
	
	We have thus shown that $\on{Im}(\phi)$ is a subgroup of $N \simeq (G_2 \times G_2) \rtimes \mu_8$ containing 
	$G_2 \times G_2$. Thus $\on{Im}(\phi)$ is the preimage of a subgroup of $\mu_8$ under the natural projection 
	$ N \to N/(G_2 \times G_2) \simeq \mu_8$.	We conclude that
	\[ H \simeq \on{Im}(\phi)\simeq (G_2\times G_2)\rtimes \mu_m \stackrel{\rm def}{=} G_2 \wr \mu_m, \]
	where $\mu_m$ is a subgroup of $\mu_8$, i.e., $m$ is a divisor of $8$. In particular, in view of our standing assumption that $\on{char}(k) \neq 2$, this shows that $H$ is smooth. (As an aside, we remark that  
	the semidirect product $(G_2\times G_2)\rtimes \mu_m$ is direct if $m = 1$, $2$ or $4$ and not direct if $m = 8$.)	
	To finish the proof, observe that,
	
	\smallskip
    (i) by \Cref{prop.g2}, $BG_2$ is stably rational.
	
	 \smallskip
    (ii) By~\Cref{birational}(b), $B(G_2 \wr \mu_{m})$ is stably birationally equivalent to $B\mu_m$.
    On the other hand, $B\mu_m$ is stably rational by Example~\ref{ex.mu}.
	
	\smallskip
	(iii) By~\Cref{stabilizer}, applied to the representation of $\on{Spin}_{14} \times \G_m$ on $V$,
    $B(\on{Spin}_{14} \times \G_m)$ is stably birationally equivalent  
    to $BH$, where $H \simeq \on{Im}(\phi) \simeq G_2 \wr \mu_{m}$. By (ii), $BH$ is stably rational, and hence, so is
    $B(\on{Spin}_{14} \times \G_m)$.
    
    (iv) By \Cref{birational}(c), $B\on{Spin}_{14}$ is stably birationally equivalent to
    $B( \on{Spin}_{14} \times \bbG_m)$. Thus $B\on{Spin}_{14}$ is stably rational.
 \end{proof}

\begin{rmk} \label{extraspecial}
Assume that $-1$ is a square in $k$, and let $D_{2m+1}, D'_{2m+1}$ be the two non-isomorphic extraspecial $2$-groups of order $2^{2m+1}$. It is shown 
in~\cite{bogomolov2013isoclinism} that $BD_{2m+1}$ and $BD'_{2m+1}$ are stably birationally equivalent. By~\cite[Corollary 6.2]{merkurjev2019rationality}, $B\on{Spin}_n$ is stably birational to $BD_{2m+1}$. Therefore, \Cref{mainthm} has the following consequence: $BD_{2m+1}$ and $BD'_{2m+1}$ are stably 
rational for any $m\leq 6$.
\end{rmk} 

\section{Coregular representations of spinor groups}
\label{sect.coregular}

In this section we present an alternative proof of~\Cref{mainthm} over $k = \bbC$ suggested to us by
G. Schwarz. 

Let $G$ be a linear algebraic group over $k$. A linear representation $\rho \colon G \to \GL(V)$ is called coregular if $k[V]^G$ is a polynomial ring over $k$. 
Coregular representations of finite groups $G$, whose order is not divisible by $\Char(k)$, are described by the celebrated theorem of Chevalley-Shephard-Todd: 
$\rho$ is coregular if and only if $\rho(G)$ is generated by pseudo-reflections. Now suppose that $G$ is a simple linear algebraic groups over $k = \mathbb C$.
In this setting irreducible coregular representations were classified by V.~Kac, V.~Popov and E.~Vinberg in~\cite{kac-popov-vinberg} and arbitrary (not necessarily irreducible)
coregular representations by G.~Schwarz in~\cite{schwarz1978representations} and (independently) by O.~Adamovich\ and\ E.~Golovina~\cite{adamovich-golovina}.
For an overview of this area of research we refer the readers to~\cite[\S8]{popov1994}.
We will not need the full classification here; we will only use the fact 
that certain specific representations of $\on{Spin}_n$ ($n = 7, 10, 11, 14$) are coregular.

Coregular representations are related to the Noether Problem via the following simple observation.

\begin{lemma} \label{lem.coregular} Suppose a smooth algebraic $k$-group $G$ has no non-trivial characters (over $k$). 
If $G$ admits a generically free coregular representation over $k$, then $BG$ is stably rational over $k$.
\end{lemma}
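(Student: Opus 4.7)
The plan is to show directly that the Rosenlicht quotient $V/G$ is $k$-rational, which immediately gives that $BG$ is stably rational. By the coregular hypothesis, I write $k[V]^G = k[f_1, \dots, f_m]$ with algebraically independent generators $f_1, \dots, f_m$. My target is the equality $k(V)^G = k(f_1, \dots, f_m)$; once this is established, $V/G$ is $k$-birational to $\bbA^m$ and hence rational over $k$.

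The inclusion $k(f_1, \dots, f_m) \subseteq k(V)^G$ is automatic, so the content lies in the reverse containment. Given $\phi \in k(V)^G$, I write $\phi = f/g$ with $f, g \in k[V]$ coprime in the UFD $k[V]$. Invariance of $\phi$ yields $g \cdot \sigma(f) = f \cdot \sigma(g)$ for every $\sigma \in G(\cl{k})$, and comparing irreducible factors (using that $\sigma$ preserves degrees) forces $\sigma(f) = \chi(\sigma) f$ and $\sigma(g) = \chi(\sigma) g$ for one and the same scalar $\chi(\sigma) \in \cl{k}^\times$. This rule defines a character $\chi \colon G \to \bbG_m$. Since $f, g \in k[V]$, the character is defined over $k$, so by the no-character hypothesis $\chi$ is trivial. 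Hence $f, g \in k[V]^G$ and $\phi \in \on{Frac}(k[V]^G) = k(f_1, \dots, f_m)$, as required.

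The point that needs a little care — and what I would flag as the main technical subtlety — is checking that $\chi$ is a genuine morphism of $k$-group schemes $G \to \bbG_m$, rather than merely a map on $\cl{k}$-points; this is what allows the ``no nontrivial characters over $k$'' hypothesis to apply. The cleanest way to see it is that the relation $\sigma(f) = \chi(\sigma) f$ exhibits the one-dimensional subspace $k \cdot f \subseteq k[V]$ as a $G$-subrepresentation defined over $k$, and the resulting action morphism $G \to \GL(k \cdot f) \simeq \bbG_m$ is by construction a $k$-rational character of $G$. Once this is in place the argument closes up, and note that generic freeness is used only to guarantee that the Rosenlicht quotient $V/G$ exists with the expected function field, so that ``$V/G$ birational to $\bbA^m$'' is a meaningful statement about $BG$.
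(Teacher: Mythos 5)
Your proposal is correct and takes essentially the same route as the paper: where the paper simply cites Popov--Vinberg [PV94, Theorem 3.3] for the key equality $k(V)^G=\operatorname{Frac}(k[V]^G)$ (valid because $k[V]$ is a UFD and $G$ has no nontrivial characters over $k$), you reprove that equality directly, and your coprime-fraction/character argument is exactly the standard proof of that theorem, with the smoothness of $G$ being what lets you pass from the computation on $G(\overline{k})$-points to a character of the group scheme defined over $k$. In both versions coregularity then gives rationality of $V/G$, and generic freeness (via the paper's definition of stable rationality of $BG$ through the no-name lemma) converts this into stable rationality of $BG$.
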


\begin{proof} Let $\rho \colon G \to \GL(V)$ be a representation defined over $k$.
Since $k[V]$ is a unique factorization domain, and $G$ has no non-trivial characters,
$k(V)$ is the fraction field of $k[V]^G$; see~\cite[Theorem 3.3]{popov1994}. If $V$ is coregular,
this shows that $k(V)^G$ is rational over $k$. If $V$ is also generically free, we conclude that $BG$ is
stably rational.
\end{proof}

Now recall from the Introduction that in order to prove Theorem~\ref{mainthm}, it suffices to show that $B \on{Spin}_n$
is stably rational for $n = 7$, $10$, $11$ and $14$. Over the field $k = \mathbb C$ of complex numbers, 
Theorem~\ref{mainthm} is now an immediate consequence of the following.

\begin{prop} \label{prop.coregular}
Let $k = \mathbb C$. Then $G:=\on{Spin}_n$ admits a coregular generically free representation for $n = 7, 10, 11, 14$. 
\end{prop}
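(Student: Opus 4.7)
The plan is to apply \Cref{lem.coregular}. Since $\on{Spin}_n$ is semisimple and simply-connected, it has no non-trivial characters, so the character hypothesis of the lemma is automatic. The task therefore reduces to exhibiting, for each $n\in\{7,10,11,14\}$, a representation $V_n$ of $\on{Spin}_n$ over $\mathbb C$ that is simultaneously coregular and generically free.

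My first step would be to consult the Schwarz classification of coregular representations of complex simple groups \cite{schwarz1978representations} (and independently Adamovich--Golovina \cite{adamovich-golovina}), which tabulates all such representations; the irreducible cases are already in Kac--Popov--Vinberg \cite{kac-popov-vinberg}. From these tables, natural candidates begin with the (half-)spin representation $S_n$, since the analysis of $\mathbb C[S_n]^{\on{Spin}_n}$ due to Igusa \cite{igusa1970classification} exhibits it explicitly as a polynomial ring (generated by a single quadratic form for $n=7$, and by a single quartic for $n=11$, with analogous statements for $n=10, 14$). However, $S_n$ by itself is not generically free, because of the large generic stabilizers identified in the previous sections: $G_2$ for $n=7$, $\on{Spin}_7$ (or $W\rtimes \on{Spin}_7$) for $n=10$, $\on{SL}_5$ for $n=11$, and a subgroup of $(G_2\times G_2)\rtimes \mu_8$ for $n=14$. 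The remedy is to enlarge $S_n$ by one or more auxiliary summands---typically copies of the vector representation or additional copies of $S_n$---chosen so that coregularity is preserved and the generic stabilizer is trimmed down to the trivial group.

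The core of the argument then splits into two tasks for each $n$: (i) read off from Schwarz's tables that the enlarged representation $V_n=S_n\oplus\cdots$ remains coregular, and (ii) verify that the generic stabilizer has become trivial. Task (i) is a direct appeal to the classification. Task (ii) is where the real work lies and is the step I expect to be the main obstacle: it amounts to checking, for each $n$, that the generic stabilizer $H_n$ of the original $S_n$-part (namely $G_2$, $\on{Spin}_7$, $\on{SL}_5$, or a subgroup of $(G_2\times G_2)\rtimes\mu_8$) acts generically freely on the added summand via the restriction of its standard action. In each case this restriction is a well-understood representation of $H_n$ of small dimension, so the verification is routine but genuinely case-by-case. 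Once (i) and (ii) are established for $n=7,10,11,14$, \Cref{lem.coregular} yields \Cref{prop.coregular}, and combining this with \Cref{ex.special} and Merkurjev's stable birational equivalence $B\on{Spin}_{2m+1}\sim_{\mathrm{sb}}B\on{Spin}_{2m+2}$ recovers \Cref{mainthm} over $\mathbb C$.
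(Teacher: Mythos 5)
Your outline matches the paper's strategy (use Schwarz's tables to pick a coregular representation containing the (half-)spin representation plus copies of the vector representation, then kill the generic stabilizer case by case), but as written it has a genuine gap: the two steps you defer are exactly the content of the proof, and neither is automatic. First, you never name the representations. Coregularity is not preserved by "adding one or more auxiliary summands" at will --- only very specific combinations appear in the classification, and the proof needs precisely $V_7^{\oplus 3}\oplus W_7$, $V_{10}^{\oplus 5}\oplus W_{10}^{1/2}$, $V_{11}^{\oplus 4}\oplus W_{11}$ and $V_{14}^{\oplus 3}\oplus W_{14}^{1/2}$ (Tables 3a.9, 3a.20, 3a.25, 3a.31 of \cite{schwarz1978representations}); without exhibiting entries of the tables that are simultaneously candidates for generic freeness, the existence claim is not established. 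Second, the generic-freeness verification is not a routine afterthought: it requires concrete branching computations, e.g.\ that $G_2$ acts generically freely on $V_7^{\oplus 3}$ because a triple of trace-zero octonions generating the octonion algebra has trivial stabilizer; that $W_{10}^{1/2}$ restricted to $\on{Spin}_5\cong\on{Sp}_4$ is $W_5^{\oplus 4}$, i.e.\ $\on{Sp}_4$ acting on $\Mat_{4\times 4}$ by left multiplication; that $W_{11}$ restricted to $\on{Spin}_7$ is $W_7^{\oplus 4}$, reducing again to $G_2$ on trace-zero octonions; and that $V_{14}\cong V_7\oplus V_7$ as a $(G_2\times G_2)$-representation. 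None of these identifications is supplied in your proposal.

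There is also a structural point where your plan, taken literally, is more awkward than the paper's argument. You propose always to start from the generic stabilizer of the spin part $S_n$ and check its action on the added summands. For $n=7$ and $n=14$ this is what the paper does ($G_2$, resp.\ $G_2\times G_2$, acting on copies of $V_7$). But for $n=10$ the generic stabilizer of the half-spin representation is the non-reductive group $W\rtimes\on{Spin}_7$ with an $8$-dimensional unipotent part, and for $n=11$ it is $\on{SL}_5$ sitting inside $\on{Spin}_{11}$ in a way that makes the restriction of the vector summands less transparent; the paper instead reverses the order, computing the (reductive) generic stabilizer of $V_{10}^{\oplus 5}$ (namely $\on{Spin}_5$) and of $V_{11}^{\oplus 4}$ (namely $\on{Spin}_7$) and then restricting the spin summand to it, which is what makes the case-by-case check genuinely short. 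So the approach is the right one, but to turn your proposal into a proof you must specify the representations from the tables and carry out these restrictions explicitly.
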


\begin{proof} Let $V_n$ be the natural representation of $\on{Spin}_n$ (via the projection $\on{Spin}_n \to \on{SO}_n$), and $W_n$ be the spin representation. If $n$ is even, let
$W_n^{1/2}$ be the half-spin representation. The following representations are shown to be coregular in \cite{schwarz1978representations}: 

\smallskip
(i) $V_7^{\oplus 3} \oplus W_7$, see Table 3a.9;

\smallskip
(ii) $V_{10}^{\oplus 5}\oplus W_{10}^{1/2}$, see Table 3a.20;

\smallskip
(iii) $V_{11}^{\oplus 4}\oplus W_{11}$, see Table 3a.25;

\smallskip
(iv) $V_{14}^{\oplus 3}\oplus W_{14}^{1/2}$, see Table 3a.31.

\smallskip
\noindent
It thus remains to show that these representations are generically free.

\smallskip
    (i)  Let $w\in W_7$ be a point in general position. By \cite[Table 1]{garibaldi2017spinors}, the stabilizer of $w$ is isomorphic to $G_2$.  We claim that 
    \begin{equation} \label{e.G2}
    \text{the $G_2$-action on $V_7^{\oplus 3}$ is generically free.}
    \end{equation}
    Note that $\Lambda = \{ x \in V_7^{\oplus 3} \oplus W_7 \, | \, \Stab_{\on{Spin}_7}(x) = 1 \}$ is a constructible subset of $V_7^{\oplus 3} \oplus W_7$. If the claim is established, then by the Fiber Dimension Theorem, \[ \dim(\Lambda) = \dim((V_7)^3 \times W_7). \] Thus
    $\Lambda$ contains a dense open subset of $V_7^3 \times W_7$, i.e., the $\on{Spin}_7$-action on  $V_7^3 \times W_7$ is generically free.
    
    It remains to prove the claim. Since $\on{dim}(V_7)=7$, the $G_2$-action on $V_7$ is the octonion representation of $G_2$. 
    In other words, we may identify $V_7$ with the space of trace-zero octonions in such a way that $G_2$ acts via octonion automorphisms. 
    If $x, y, z$ generate the octonions as a $\mathbb C$-algebra, then clearly the $G_2$-stabilizer of $(x, y, z)\in V_7^{\oplus 3}$ is trivial. The set $U \subset V_7^{\oplus 3}$ of generating triples is readily seen to be open. Note also that $U$ is non-empty, because $(i, j, k) \in U$, where $i$, $j$ and $k$ are the standard generators.
    
    \smallskip
    (ii) If $v:=(v_1,v_2,v_3,v_4,v_5)\in V_{10}^{\oplus 5}$ is a general point, the stabilizer of $v$ is isomorphic to $\on{Spin}_5$. Arguing as in (i), it remains to show that the restriction of
    $W_{10}^{1/2}$ to $\on{Spin}_5$ is generically free. 
    Recall that, for every $n\geq 1$, 
    
    \begin{itemize}
    \item
    the restriction of the spin representation $W_{2n+1}$ to $\on{Spin}_{2n}$ is $W_{2n}^{1/2} \oplus W_{2n}^{1/2}$, and 
    
    \item
  the restriction of $W_{2n}^{1/2}$ to $\on{Spin}_{2n-1}$ is the spin representation $W_{2n-1}$; 
    \end{itemize}
    see \cite[p. 1000]{igusa1970classification} or \cite[Ex. 31.2, Ex. 31.3]{bump2013lie}. Restricting $W_{10}^{1/2}$ from $\on{Spin}_{10}$ to $\on{Spin}_9$, then to $\on{Spin}_8$, etc., we see that
    as a $\on{Spin}_5$-representation, $W_{10}^{1/2}$ is isomorphic to $W_5^{\oplus 4}$. Note that $\dim(W_5) = 4$.
    There is an accidental isomorphism $\on{Spin}_5\cong \on{Sp}_4$, under which $W_5$ corresponds to the natural $4$-dimensional representation of $\on{Sp}_4$; see \cite[Proposition 5.1]{adams1996lectures}. Thus we can identify the $\on{Spin}_5$-action on $W_5^4$ with the $\on{Sp}_4$-action on $\Mat_{4 \times 4}$ via left multiplication.
    The latter action is clearly generically free.
    
    \smallskip
    (iii) If $v:=(v_1,v_2,v_3,v_4)\in V_{11}^{\oplus 4}$ is a general point, the stabilizer of $v$ is isomorphic to $\on{Spin}_7$. Once again, it suffices to show that when we restrict $W_{11}$ from $\on{Spin}_{11}$ to $\on{Spin}_7$, it becomes generically free.
    Arguing as in (ii) above, we see that as a $\on{Spin}_7$-representation, $W_{11}$ is isomorphic to $W_7^{\oplus 4}$. If $w \in W_7$ is a general point, then
    the $\on{Spin}_7$-stabilizer of $w$ is isomorphic to $G_2$; see~\eqref{e.7}.
    Thus it suffices to show that the $G_2$-action on $W_7^{\oplus 3}$ is generically free.
    As a $G_2$-reresentation, $W_7 = \mathbb C \oplus V_7$, where $\mathbb C$ is the trivial $1$-dimensional representation generated by the vector stabilized by $G_2$, and
    $V_7$ is a $7$-dimensional representation, which can be identified with trace-zero octonions, as in (i). The desired conclusion now follows from~\eqref{e.G2}.
    
    \smallskip  
    (iv) If $w\in W_{14}^{1/2}$ is a general point, then by \cite[Table 1]{garibaldi2017spinors} the stabilizer of $w$ in $\on{Spin}_{14}$ is isomorphic to $G_2\times G_2$. 
    It thus suffices to show that $G_2 \times G_2$ acts generically freely on $V_{14}^{\oplus 3}$.
    Since $G_2\times G_2$ has trivial center, we may view it as a subgroup of $\on{SO}(V_{14})$ via the projection $\on{Spin}_{14}\to \on{SO}(V_{14})$. 
    By~\cite[\S 8]{garibaldi2017spinors}, $V_{14} \cong V_7 \oplus V_7$ as a $G_2\times G_2$-representation. 
    Here $V_7$ is identified with trace-zero octonions, with the natural action of $G_2$, and $(g, g') \in G_2 \times G_2$
    acts on $(v, v') \in V_7 \times V_7$ by $(v, v') \mapsto (g(v), g'(v'))$. By~\eqref{e.G2}, $G_2$ acts generically freely on $V_7^3$. Hence,
    $G_2\times G_2$ acts generically freely on $V_{14}^{\oplus 3} \simeq V_7^{\oplus 3} \oplus V_7^{\oplus 3}$, as desired.
\end{proof}

	\section*{Acknowledgments} 
The authors are grateful to Gerald Schwarz for contributing Proposition~\ref{prop.coregular}, to
Skip Garibaldi and Mattia Talpo for informative correspondence, and the anonymous referee for helpful comments.

\end{document}